\documentclass[11pt,a4paper]{amsart}
\usepackage{amssymb,xspace}
\usepackage{amstext}
%\usepackage{tikz}
%\usetikzlibrary{arrows,decorations.pathmorphing,backgrounds,fit,positioning,shapes.symbols,chains}
\theoremstyle{plain}
\usepackage{amsbsy,amssymb,amsfonts,latexsym}

\marginparwidth=10 true mm
\oddsidemargin=0 true mm
\evensidemargin=0 true mm
\marginparsep=5 true mm
\topmargin=0 true mm
\headheight=8 true mm
\headsep=4 true mm
\topskip=0 true mm
\footskip=15 true mm

\setlength{\textwidth}{150 true mm}
\setlength{\textheight}{220 true mm}
\setlength{\hoffset}{8 true mm}
\setlength{\voffset}{2 true mm}

\parindent=0 true mm

\usepackage{enumerate}
\usepackage{graphics}

\date{\today}
\title{Multifractal analysis of the divergence of Fourier series}
\author{Fr\'ed\'eric Bayart, Yanick Heurteaux}
\address{
Clermont Universit\'e, Universit\'e Blaise Pascal, Laboratoire de Math\'ematiques, BP 10448, F-63000 CLERMONT-FERRAND -
CNRS, UMR 6620, Laboratoire de Math\'ematiques, F-63177 AUBIERE
}
\email{Frederic.Bayart@math.univ-bpclermont.fr, Yanick.Heurteaux@math.univ-bpclermont.fr}

\subjclass{}

\keywords{}

\newcommand{\veps}{\varepsilon}

\def\NN{\mathbb N}

\def\TT{\mathbb T}
\def\DD{\mathbb D}

\def\H{\mathcal H}

\newtheorem{theorem}{Theorem}[section]

\newtheorem{lemma}[theorem]{Lemma}

\newtheorem{corollary}[theorem]{Corollary}

{\theoremstyle{definition}}
{\theoremstyle{definition}}

{\theoremstyle{definition}}

{\theoremstyle{definition}\newtheorem{definition}[theorem]{Definition}}

{\theoremstyle{definition}}

{\theoremstyle{definition}\newtheorem{remark}[theorem]{Remark}}

%Environnement d\'emonstration

%Environnement remarque

%Environnement Exemple

\begin{document}

\begin{abstract}
A famous theorem of Carleson says that, given any function $f\in L^p(\TT)$, $p\in(1,+\infty)$, its 
Fourier series $(S_nf(x))$ converges for almost every $x\in \mathbb T$. Beside this property, the 
series may diverge at some point, without exceeding  $O(n^{1/p})$. 
We define the divergence index at $x$ as the infimum of the positive real numbers $\beta$ such that
$S_nf(x)=O(n^\beta)$ and we are interested in the size of the exceptional sets $E_\beta$, namely the sets of $x\in\mathbb T$ with divergence index equal to $\beta$. We show that quasi-all functions in $L^p(\TT)$ have a multifractal behavior with respect to this definition.  Precisely, for quasi-all functions in $L^p(\mathbb T)$, for all $\beta\in[0,1/p]$, $E_\beta$ has
 Hausdorff dimension equal to $1-\beta p$.
We also investigate the same problem in $\mathcal C(\mathbb T)$, replacing polynomial divergence by
logarithmic divergence. In this context, the results that we get on the size of the exceptional sets 
are rather surprizing.\\
\end{abstract}

\maketitle

\section{Introduction}
\subsection{Description of the results}
The famous theorem of Carleson and Hunt asserts that, when $f$ belongs to $L^p(\mathbb T)$, $1<p<+\infty$,
where $\mathbb T=\mathbb R/\mathbb Z$, the sequence of the partial sums of its Fourier series $(S_nf(x))_{n\geq 0}$ 
converges for almost every $x\in\mathbb T$. On the other hand, it can diverge at some point. This divergence cannot be too 
fast since, for any $f\in L^p(\mathbb T)$ and any $x\in\mathbb T$, $|S_n
f(x)|\leq C_p n^{1/p}\Vert f\Vert_p$ (see \cite{Zyg} for instance). In view of these results, a natural question arises. How big can be the sets $F$ such that $|S_nf(x)|$ grows as fast
as possible for every $x\in F$? More generally, can we say something on the size of the sets such that $|S_nf(x)|$
behaves like (or as bad as) $n^{\beta}$ for some $\beta\in(0,1/p]$?

\smallskip

To measure the size of subsets of $\TT$, 
we shall use the Hausdorff dimension. Let us recall the relevant definitions (we refer to \cite{Falc} and to \cite{Mat95} for more on this subject). 
If $\phi:\mathbb R_+\to\mathbb R_+$ is a nondecreasing continuous function
satisfying $\phi(0)=0$ ($\phi$ is called a \emph{dimension function} or a \emph{jauge function}),
the $\phi$-Hausdorff outer measure of a set $E\subset \mathbb R^d$ is 
$$\mathcal H^{\phi}(E)=\lim_{\veps\to 0}\inf_{r\in R_\veps(E)}\sum_{B\in r}\phi(|B|),$$
$R_\veps(E)$ being the set of countable coverings of $E$ with balls $B$ of diameter $|B|\leq\veps$. 
When $\phi_s(x)=x^s$, we write for short $\mathcal H^s$ instead of $\mathcal H^{\phi_s}$. The Hausdorff dimension of a set $E$
is 
$$\dim_{\mathcal H}(E):=\sup\{s>0; \mathcal H^s (E)>0\}=\inf\{s>0;\ \mathcal H^s(E)=0\}.$$

The first result studying the Hausdorff dimension of the divergence sets of
 Fourier series is due to J-M. Aubry \cite{Aub06}.

\begin{theorem}\label{THMAUBRY}
Let $f\in L^p(\mathbb T)$, $1<p<+\infty$. For $\beta\geq 0$, define
$$\mathcal E(\beta,f)=\left\{x\in\mathbb T;\ \limsup_{n\to+\infty}n^{-\beta}|S_nf(x)|>0\right\}.$$
Then $\dim_\mathcal{H}\big(\mathcal E(\beta,f)\big)\leq 1-\beta p$. Conversely, given a set $E$ such that $\dim_\mathcal{H}(E)<1-\beta p$, there exists
a function $f\in L^p(\mathbb T)$ such that, for any $x\in E$,
$\displaystyle\limsup_{n\to +\infty}n^{-\beta}|S_nf(x)|=+\infty$.
\end{theorem}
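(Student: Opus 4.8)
I would establish the two halves separately; the upper bound is short, while the converse construction is where the work lies.

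\textbf{Upper bound.} Since $\mathcal E(\beta,f)=\bigcup_{k\ge1}A_{1/k}$ with $A_\lambda=\{x:\limsup_n n^{-\beta}|S_nf(x)|>\lambda\}$, it suffices to bound $\dim_{\mathcal H}(A_\lambda)$ for fixed $\lambda>0$, and the plan is a $\limsup$-covering at dyadic scales. Let $G_j$ be the union of the dyadic arcs of $\TT$ of length $2^{-j}$ containing a point $x$ with $|S_nf(x)|>\lambda n^{\beta}$ for some $n\in[2^{j-1},2^j)$; then $A_\lambda\subset\bigcap_J\bigcup_{j\ge J}G_j$ because a point of $A_\lambda$ has such an $n$ for arbitrarily large $j$. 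To count the arcs $N_j$ of $G_j$ I would use the local Nikolskii inequality for trigonometric polynomials: for an arc $I$ of $G_j$ with witness $x_I$ and $n_I\in[2^{j-1},2^j)$,
$$\lambda^p2^{(j-1)\beta p}<|S_{n_I}f(x_I)|^p\le C\,n_I\!\!\int_{|x-x_I|\le\pi/n_I}\!\!|S_{n_I}f|^p\le C\,2^{j}\!\!\sum_{I'\sim I}\!\int_{I'}|S^{*}_{2^j}f|^p,$$
the sum over the $O(1)$ dyadic arcs $I'$ of length $2^{-j}$ meeting $B(x_I,\pi/n_I)$, with $S^{*}_Nf=\sup_{n\le N}|S_nf|$. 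Summing over $I$ and invoking the Carleson--Hunt maximal inequality $\|S^{*}f\|_p\le C_p\|f\|_p$ gives $N_j\lesssim\lambda^{-p}\|f\|_p^p\,2^{(1-\beta p)j}$, so for $s>1-\beta p$, $\mathcal H^s_\infty(\bigcup_{j\ge J}G_j)\le\sum_{j\ge J}N_j2^{-sj}\lesssim\sum_{j\ge J}2^{(1-\beta p-s)j}\to0$; hence $\mathcal H^s(A_\lambda)=0$ and $\dim_{\mathcal H}(A_\lambda)\le1-\beta p$.

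\textbf{Construction.} Fix $s$ with $\dim_{\mathcal H}(E)<s<1-\beta p$ and set $\gamma=(1-\beta p-s)/p>0$. I would use smooth \emph{atoms}: for $r=2^{-q}$, a $C^\infty$ bump $\chi_{x_0,r}$ localized on the arc $B(x_0,r)$, comparable to $r^{-\beta}$ on it, with $\|\chi^{(m)}_{x_0,r}\|_\infty\lesssim_m r^{-\beta-m}$; then $\|\chi_{x_0,r}\|_p^p\lesssim r^{\,1-\beta p}$, and by the super-polynomial decay of $\widehat{\chi_{x_0,r}}$ one has $|S_N\chi_{x_0,r}|\gtrsim r^{-\beta}$ on $B(x_0,r)$ once $N\ge Kr^{-1}$. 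Next I would pick a \emph{nested} family of covers of $E$ by (essentially) disjoint arcs: $\mathcal B_{\ell+1}$ refines $\mathcal B_\ell$, every arc of $\mathcal B_\ell$ has length $\le2^{-\ell}$, and $\sum_{B\in\mathcal B_\ell}|B|^s<\veps_\ell$ with $\veps_\ell$ decaying fast — possible since $\mathcal H^s(E)=0$, by refining inside $\mathcal B_\ell$. Writing $\{B(x_{\ell,q,i},2^{-q})\}_i$ for the arcs of $\mathcal B_\ell$ of length $2^{-q}$ (only $q\ge\ell$ occurs, fewer than $\veps_\ell2^{sq}$ of them), I would enumerate the pairs $(\ell,q)$ by increasing $q$ then increasing $\ell$, attach to each one frequency interval of length $\asymp2^{(1+\eta)q}$ with a comparable gap before it ($\eta>0$ small, fixed), placed consecutively — so that, since only levels $\ell\le q$ occur at scale $q$, the interval of $(\ell,q)$ lies in $[0,2^{(1+2\eta)q}]$ — and set
$$f(x)=\sum_{\ell\ge1}\sum_{q\ge\ell}\lambda_{\ell,q}\,e^{2\pi iM_{\ell,q}x}\sum_i\chi_{x_{\ell,q,i},2^{-q}}(x),\qquad\lambda_{\ell,q}=K_0^{\,\ell}\,2^{\gamma'q},$$
with $M_{\ell,q}$ the centre of the $(\ell,q)$-interval, $\gamma'<\gamma$ fixed and close to $\gamma$, and $K_0$ a large constant. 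All atoms of a group share the modulation $M_{\ell,q}$; this is harmless because they are (essentially) disjointly supported, and it keeps the number of frequency bands small.

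\textbf{Verification and the main obstacle.} To see $f\in L^p(\TT)$ I would apply the Littlewood--Paley inequality to the scale-blocks $g_q=\sum_{\ell\le q}\lambda_{\ell,q}e^{2\pi iM_{\ell,q}x}\sum_i\chi_{x_{\ell,q,i},2^{-q}}$, which occupy lacunarily separated frequency ranges: $\|f\|_p\lesssim_p(\sum_q\|g_q\|_p^{\min(p,2)})^{1/\min(p,2)}$, and disjointness of supports within a level gives $\|g_q\|_p\lesssim\sum_{\ell\le q}\lambda_{\ell,q}(\veps_\ell2^{sq}2^{-(1-\beta p)q})^{1/p}=2^{(\gamma'-\gamma)q}\sum_{\ell\le q}K_0^\ell\veps_\ell^{1/p}\lesssim2^{(\gamma'-\gamma)q}$, summable exactly because $\gamma'<\gamma$, i.e. because $s<1-\beta p$. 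For the divergence, fix $x\in E$: at level $\ell$ it lies in one arc, of scale $q(x,\ell)\ge\ell$, and nestedness makes $\ell\mapsto q(x,\ell)$ nondecreasing, so the only groups with interval below the top $N$ of the $(\ell,q(x,\ell))$-interval and nonzero at $x$ are those of levels $\ell'\le\ell$. Taking $N$ just above that interval, and using the reproduction property of the atoms and the rapid spectral decay to control the leakage from the other bands, one gets (with $q=q(x,\ell)$)
$$|S_Nf(x)|\ \ge\ c\,\lambda_{\ell,q}2^{\beta q}-C\sum_{\ell'<\ell}\lambda_{\ell',q(x,\ell')}2^{\beta q(x,\ell')}-o(1);$$
since $q(x,\ell')\le q$ and $\beta+\gamma'>0$, the subtracted sum is $\le(\sum_{\ell'<\ell}K_0^{\ell'})2^{(\beta+\gamma')q}$, so if $K_0$ is large enough that $cK_0^\ell\ge2C\sum_{\ell'<\ell}K_0^{\ell'}$ for all $\ell$, then $|S_Nf(x)|\gtrsim K_0^\ell2^{(\beta+\gamma')q}$. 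As $N\le2^{(1+2\eta)q}$, this yields $N^{-\beta}|S_Nf(x)|\gtrsim K_0^\ell2^{(\gamma'-2\eta\beta)q}\to+\infty$ with $\ell$ as soon as $\eta<\gamma'/(2\beta)$, and since $q(x,\ell)\to\infty$ the associated $N$ also go to infinity; hence $\limsup_n n^{-\beta}|S_nf(x)|=+\infty$ on $E$. The hard part is this last step: $S_Nf(x)$ equals the full value at $x$ of the low-frequency part of $f$ — finite at each stage but tending to $+\infty$ on $E$ — plus the active block plus leakage, so one must make the active block dominate (geometric weights $K_0^\ell$), and the normalization $n^{-\beta}$ forces the frequency-block positions to grow only barely faster than $2^q$, which is why all atoms of a pair $(\ell,q)$ must be packed into one band with a common modulation and why high-order smoothness is needed to keep the Dirichlet-kernel tails summable over the $\veps_\ell2^{sq}$ atoms of that band.
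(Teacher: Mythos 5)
The paper does not prove Theorem~\ref{THMAUBRY}; it quotes it from Aubry's article \cite{Aub06}, so there is no in-house proof against which to compare yours. Your argument therefore stands as an independent reconstruction, and on the whole the two-part strategy (Carleson--Hunt maximal bound plus dyadic counting for the upper bound; an atomic, frequency-separated construction for the lower bound) is the right shape for this result and is close in spirit to what Aubry does. That said, a few steps need to be firmed up before this can be called a proof.

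The most substantial issue is the ``local Nikolskii inequality'' $|P(x_0)|^p\le C\,n\int_{|x-x_0|\le\pi/n}|P|^p$ for trigonometric polynomials $P$ of degree $n$, which carries the entire upper bound. This is not a standard off-the-shelf estimate, and it does not follow from the usual reproducing-kernel proof of the global Nikolskii inequality $\|P\|_\infty\le Cn^{1/p}\|P\|_p$: the Dirichlet (or de la Vall\'ee Poussin, or Jackson) kernel has $\ell^1$-tail of size $\gtrsim1$ outside an interval of length $1/n$, so the non-local contribution to $P(x_0)$ is a priori of size $n^{1/p}\|P\|_{L^p(\TT)}$, not $n^{1/p}\|P\|_{L^p(B(x_0,\pi/n))}$. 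Bernstein's inequality only yields $|P|\ge|P(x_0)|/2$ on an interval of length $\sim|P(x_0)|/(n\|P\|_\infty)$, which is too small when $\|P\|_\infty\gg|P(x_0)|$. Some version of the estimate is very likely true (it is a concentration/uncertainty statement for the Paley--Wiener space), but you must supply a proof or a precise reference; as written this is a genuine gap in the upper bound. A safer route, and I believe the one Aubry takes, is to combine the weak-$L^p$ Carleson--Hunt bound for $S^*f$ with a Vitali covering by intervals on which $|S_{n}f|$ stays large (controlling their length via Bernstein applied to $S_nf$ together with the global Nikolskii bound $\|S_nf\|_\infty\lesssim n^{1/p}\|f\|_p$), so that one only ever needs global, not local, polynomial inequalities.

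On the construction, the architecture (nested dyadic covers $\mathcal B_\ell$ of $E$ with $s$-sums $\veps_\ell$, modulated $C^\infty$ atoms of height $r^{-\beta}$ at scale $r=2^{-q}$, geometric weights $K_0^\ell 2^{\gamma'q}$, consecutive frequency blocks) is coherent, and the key cancellations work out as you say once one accepts the $o(1)$ bookkeeping: $\|g_q\|_p\lesssim2^{(\gamma'-\gamma)q}\sum_\ell K_0^\ell\veps_\ell^{1/p}$ is indeed summable since $s<1-\beta p$, and at $x\in E$ the active block contributes $\asymp K_0^\ell 2^{(\beta+\gamma')q}$ while lower-level blocks contribute at most $\sum_{\ell'<\ell}K_0^{\ell'}2^{(\beta+\gamma')q}$, which is dominated for $K_0$ large. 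However, the $o(1)$ you write encompasses several sums that must genuinely be checked: the cross-leakage at $x$ from the $\lesssim\veps_\ell 2^{sq}$ atoms within the \emph{same} band (this is where you need the smoothness order $m$ to satisfy $\eta(m-1)>1$ so the $\sum_{|k|>N-M}|\widehat\chi(k)|$ bound beats the $2^{sq}$ atom count), the leakage from earlier bands not active at $x$, and the low-frequency tails of all later bands; none of this is written out. There is also a small wrinkle at $\beta=0$ (your condition $\eta<\gamma'/(2\beta)$ is vacuous there, but the intended inequality $\gamma'-2\eta\beta>0$ still holds). These are fixable, but at present the construction is a blueprint rather than a proof.
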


This result motivated us to introduce the notion of divergence index. For a
given function $f\in L^p(\TT)$ and a given point $x_0\in \TT$, we can define
the real number $\beta(x_0)$ as the infimum of the non negative real numbers
$\beta$ such that $|S_nf(x_0)|=O(n^\beta)$. The real number $\beta(x_0)$ will
be called the \emph{divergence index} of the Fourier series of $f$ 
at point $x_0$. Of
course, for any function $f\in L^p(\TT)$ ($1<p<+\infty$) and any point $x_0\in\TT$, $0\le
\beta(x_0)\le 1/p$. Moreover, Carleson's theorem implies that $\beta(x_0)=0$
almost surely and we would like to have precise estimates on the size of the level sets of the function
$\beta$. These are defined as

\begin{eqnarray*}
E(\beta,f)&=&\left\{x\in\mathbb T;\ \beta(x)=\beta\right\}\\
&=&\left\{x\in\mathbb T;\ \limsup_{n\to+\infty}\frac{\log |S_nf(x)|}{\log
    n}=\beta\right\}.
\end{eqnarray*}
We  can ask for which values of $\beta$ the sets
$E(\beta,f)$ are non-empty. This set of values will be called the domain of definition of the spectrum of singularities of $f$.
If $\beta$ belongs to the domain of definition of the spectrum of singularities, it is also interesting to estimate the
Hausdorff dimension of the sets $E(\beta,f)$. The function $\beta\mapsto
\dim_\mathcal{H}(E(\beta,f))$ will be called the spectrum of singularities of the
function $f$ (in terms of its Fourier series). 
By Aubry's result, $\dim_\mathcal{H}(E(\beta,f))\leq1-\beta p$ and, for any fixed $\beta_0\in[0,1/p)$,
for any $\veps>0$, one can find 
$f\in L^p(\TT)$ such that $\dim_\mathcal{H}\left(\bigcup_{\beta_0\leq\beta\leq
  1/p}E(\beta,f)\right)\geq 1-\beta_0p-\veps$. Our first main result
is that a \emph{typical} function $f\in L^p(\TT)$ satisfies $\dim_\mathcal{H}(E(\beta,f))=1-\beta p$ for \emph{any} $\beta\in[0,1/p]$.
In particular, $f$ has a multifractal behavior with respect to the summation of its Fourier series, meaning that the domain of 
definition of its spectrum of singularities contains an interval with non-empty interior.
\begin{theorem}\label{THMLP}
Let $1<p<+\infty$. For quasi-all functions $f\in L^p(\TT)$, for any $\beta\in[0,1/p]$, $\dim_\mathcal{H}\big(E(\beta,f)\big)=1-\beta p$.
\end{theorem}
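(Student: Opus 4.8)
The plan is to prove the two inequalities $\dim_{\mathcal H}(E(\beta,f)) \le 1-\beta p$ and $\dim_{\mathcal H}(E(\beta,f)) \ge 1-\beta p$ separately, the first being essentially free and the second being where all the work lies. The upper bound holds for \emph{every} $f \in L^p(\TT)$: since $E(\beta,f) \subset \mathcal E(\beta-\veps,f)$ for every $\veps>0$ (if $\limsup \log|S_nf(x)|/\log n = \beta$ then certainly $\limsup n^{-(\beta-\veps)}|S_nf(x)| = +\infty > 0$), Theorem~\ref{THMAUBRY} gives $\dim_{\mathcal H}(E(\beta,f)) \le 1-(\beta-\veps)p$, and letting $\veps \to 0$ yields the claim. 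So the theorem reduces to showing that the set $\G$ of functions $f \in L^p(\TT)$ for which $\dim_{\mathcal H}(E(\beta,f)) \ge 1-\beta p$ for \emph{all} $\beta \in [0,1/p]$ is residual.

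The natural strategy for the lower bound is the classical ``typical elements are built by a Baire category argument on a countable family of good finite-dimensional approximants'' scheme. For each $\beta$ in a countable dense subset of $[0,1/p]$, each $k \in \NN$, and each $j \in \NN$, I would like to describe an open dense set $\mathcal U_{\beta,k,j}$ of functions $f$ such that $\mathcal H^{1-\beta p - 1/k}$-content of $\{x : |S_nf(x)| \ge j \cdot n^{\beta-1/k} \text{ for some } n \ge j\}$ (or some localized variant with a net of small balls) exceeds a fixed positive number; intersecting over all parameters and taking a $\limsup$ in $\beta$ along the dense sequence then forces, for $f$ in the residual $\G_\delta$, a lower bound on $\dim_{\mathcal H}(E(\beta,f))$ simultaneously for \emph{all} $\beta$, not just the countable dense ones (this last ``upgrade to all $\beta$'' step is standard in multifractal Baire arguments and uses that $\beta \mapsto$ the relevant sets behaves monotonically). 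Openness of each $\mathcal U_{\beta,k,j}$ comes from the continuity of $f \mapsto S_nf$ on $L^p$ for fixed $n$ (the conditions only involve finitely many $n$ up to a truncation); density is the crux.

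Density amounts to: given any $g \in L^p(\TT)$ and any $\eta>0$, find $f$ with $\|f-g\|_p < \eta$ whose Fourier partial sums blow up like $n^{\beta}$ on a set of dimension close to $1-\beta p$. Since trigonometric polynomials are dense, take $g$ to be a polynomial and look for $f = g + h$ with $\|h\|_p$ small and $h$ a ``lacunary-type'' perturbation concentrated in high frequencies (so it does not interfere with $g$'s partial sums) whose partial sums are large on a prescribed Cantor-like set. This is precisely the content of the converse (constructive) half of Aubry's theorem, Theorem~\ref{THMAUBRY}: given $E$ with $\dim_{\mathcal H}(E) < 1-\beta p$ there is $f \in L^p$ with $\limsup n^{-\beta}|S_nf(x)| = +\infty$ on $E$. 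I would adapt that construction to also control the $L^p$-norm of the perturbation (which Aubry's building blocks — suitably normalized Fejér-type or Dirichlet-type kernels supported near the Cantor set, scaled by $\veps$ — should allow), choosing for $E$ a self-similar Cantor set of dimension exactly $1-\beta p - 1/(2k)$, which has positive $\mathcal H^{1-\beta p-1/k}$-measure.

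The main obstacle is the density step, and within it the simultaneous control of two competing requirements: the perturbation $h$ must have small $L^p$-norm yet its partial sums $S_nh$ must exceed $n^{\beta}$ (times a large constant) for infinitely many $n$ at every point of a Cantor set of near-maximal dimension. Quantitatively this is tight — $\|h\|_p$ small forces $|S_nh(x)| = O(n^{1/p}\|h\|_p)$, so one is operating near the Carleson--Hunt ceiling and the relation $\dim = 1-\beta p$ is exactly what balances the budget. Getting this right requires a careful choice of frequency blocks (geometrically increasing), the number of terms per block, and the geometry of the Cantor set so that the Dirichlet sums localize correctly; I expect this to be the heart of the argument, with the surrounding Baire machinery and the ``all $\beta$ at once'' passage being comparatively routine. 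One should also double-check that the blow-up one produces gives divergence index \emph{exactly} $\beta$ and not larger — i.e. the upper bound $\beta(x) \le \beta$ must survive on (a large subset of) the Cantor set — which may require intersecting with the full-measure Carleson set or arranging the construction so $|S_nh(x)| = O(n^{\beta+\veps})$ uniformly.
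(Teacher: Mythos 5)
Your high-level plan (upper bound for free via Aubry, lower bound as a Baire category argument whose density step is carried by an explicit lacunary construction in high frequencies) is in the right spirit and matches the paper's architecture. But two steps that you wave off as routine are in fact where the paper does genuine work, and the fixes you sketch do not hold up.

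First, the ``upgrade to all $\beta$ at once'' via monotonicity does not work as stated. The level sets $E(\beta,f)$ are not monotone in $\beta$; the monotone objects are $\tilde E(\beta,f)=\{x:\limsup \log|S_nf(x)|/\log n\ge\beta\}$. From a residual set of $f$'s giving $\mathcal H^{1-\beta_i p}(\tilde E(\beta_i,f))>0$ for a countable dense $\{\beta_i\}$, monotonicity only yields $\dim_{\mathcal H}(\tilde E(\beta,f))\ge 1-\beta p$ for arbitrary $\beta$ (by taking $\beta_i\downarrow\beta$), \emph{not} positivity of $\mathcal H^{1-\beta p}(\tilde E(\beta,f))$ --- a set can have $\mathcal H^{s'}$-measure infinite for all $s'<s$ yet $\mathcal H^s$-measure zero. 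And it is precisely a lower bound on $\mathcal H^{1-\beta p}(\tilde E(\beta,f))$, not merely on the dimension, that is needed to pass from $\tilde E$ to the level set $E(\beta,f)=\tilde E(\beta,f)\setminus\bigcup_{\beta'>\beta}\tilde E(\beta',f)$, because a countable union of sets of dimension strictly below $1-\beta p$ can still have dimension exactly $1-\beta p$. The paper avoids the discretisation in $\beta$ altogether: it builds a single saturating block $g_j$ (Lemma~\ref{LEMBASICLP}) whose partial sums are large at a point $x$ to a degree governed by how well $x$ is approximated by dyadics; the residual set is $A=\bigcap_l\bigcup_{j\ge l}B(f_j+\tfrac1j e_{j2^{j+1}}g_j, r_j)$, and for \emph{each} $\alpha>1$ the mass transference principle (Corollary~\ref{CORUBI2}) applied to $\limsup_l \mathbf I_{J_l,j_l}$ with $J_l=[j_l/\alpha]+1$ yields $\mathcal H^{1/\alpha}(E)=+\infty$ directly --- a measure statement, for a continuum of $\alpha$'s, from a single $f$.

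Second, your proposed fix for ensuring the divergence index is \emph{exactly} $\beta$ --- intersecting with the full-measure Carleson set or forcing a pointwise upper bound $|S_nh(x)|=O(n^{\beta+\veps})$ --- is off target. Full Lebesgue measure says nothing about $\mathcal H^{1/\alpha}$-measure, and the saturating construction does not give pointwise upper bounds. The paper's argument is sharper: split $D_\alpha$ (or $E$) into the exact level set $D_\alpha^1$ and the over-diverging set $D_\alpha^2$, note $D_\alpha^2\subset\bigcup_n\mathcal E(\beta_n,g)$ for $\beta_n\downarrow\frac1p(1-\frac1\alpha)$, and invoke Aubry's theorem to conclude each $\mathcal E(\beta_n,g)$ has dimension $<1/\alpha$, hence $\mathcal H^{1/\alpha}(D_\alpha^2)=0$; since $\mathcal H^{1/\alpha}(D_\alpha)=+\infty$, the exact level set $D_\alpha^1$ inherits infinite $\mathcal H^{1/\alpha}$-measure. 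Finally, the density step --- which you correctly identify as the crux --- is not merely an adaptation of Aubry's converse; the paper's Lemma~\ref{LEMBASICLP} builds $g_j=\sum_{J=1}^j c_{J,j}\,e_{(2J-1)2^j}\sigma_{2^j}\chi_{J,j}$ with $c_{J,j}=\frac1j 2^{-(J-j+1)/p}$, carefully arranging disjoint spectra across $J$ and across $j$, an $L^p$-norm bound, and a uniform lower bound $|S_{n_2}g_j-S_{n_1}g_j|\ge\frac1{4j}2^{-(J-j+1)/p}$ on $\mathbf I_{J,j}$ via the sharpened Fej\'er estimate of Lemma~\ref{LEMFEJER}; these quantitative choices are exactly calibrated to the budget $1-\beta p=1/\alpha$ you correctly anticipate, but without them the argument does not close.
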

The terminology "quasi-all" used here is relative to the Baire category theorem. It means that this property is true for a residual
set of functions in $L^p(\TT)$.

\medskip

In a second part of the paper, we turn to the case of $\mathcal C(\mathbb
T)$, the set of continuous functions on $\mathbb T$. 
In that space, the divergence of Fourier series is controlled by a 
logarithmic factor.
More precisely, if $(D_n)$ is the sequence of the Dirichlet kernels, we know that $\|S_n
f\|_\infty\leq \|D_n\|_1\|f\|_\infty$, so that
there exists some absolute constant $C>0$ such that
$\|S_n f\|_\infty\leq  C\|f\|_\infty\log n$ for any $f\in\mathcal C(\mathbb T)$ and any $n> 1$.
As before, one can discuss the size of the sets such that $|S_nf(x)|$ behaves badly, namely like $(\log n)^{\beta}$,
$\beta\in[0,1]$. More precisely, mimicking the case of the $L^p$ spaces, 
we introduce, for any $\beta\in[0,1]$ and any $f\in\mathcal C(\mathbb T)$, the following sets:
\begin{eqnarray*}
\mathcal F(\beta,f)&=&\left\{x\in\mathbb T;\ \limsup_{n\to+\infty}\,(\log n)^{-\beta}|S_nf(x)|>0\right\}\\
F(\beta,f)&=&\left\{x\in\mathbb T;\ \limsup_{n\to+\infty}\frac{\log |S_nf(x)|}{\log\log n}=\beta\right\}.
\end{eqnarray*}
Theorem \ref{THMAUBRY} indicates that, on $L^p(\TT)$, $|S_nf(x)|$ can grow as fast as possible (namely like $n^{1/p}$) 
only on small sets: for every function $f\in L^p(\TT)$,
$\dim_{\mathcal H}(E(1/p,f))=0$. This property dramatically breaks down on $\mathcal C(\TT)$, as the following result indicates.
\begin{theorem}\label{THMCT1}
For quasi-all functions $f\in\mathcal C(\TT)$, $\dim_{\mathcal H}\big(F(1,f)\big)=1$.
\end{theorem}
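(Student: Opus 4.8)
The plan is to prove the statement by a Baire category argument which, for a residual set of $f$, produces a Cantor set $C\subseteq F(1,f)$ with $\dim_{\mathcal H}C=1$; the bound $\dim_{\mathcal H}F(1,f)\le 1$ is free since $F(1,f)\subseteq\mathbb T$, and the estimate $\|S_nf\|_\infty\le C\|f\|_\infty\log n$ already forces $\limsup_n\frac{\log|S_nf(x)|}{\log\log n}\le 1$ for every $x$, so $F(1,f)$ is exactly the set where this $\limsup$ equals $1$ and only a lower bound on its dimension is at issue. Fix an integer $k\ge 2$. I would build recursively a \emph{scheme}: a tree of closed balls $\overline{B(P_w,\varepsilon_w)}$ in $\mathcal C(\mathbb T)$ ($P_w$ a trigonometric polynomial), together with closed intervals $I_w\subseteq\mathbb T$ and integers $m_w$, such that children balls lie in their parent ball with $\varepsilon_w\to 0$ along each branch, $I_{w'}\subseteq I_w$ when $w'$ is a child of $w$, every node has at least two children, the Frostman-type inequality $\sum_{w'}|I_{w'}|^{1-1/k}\ge\tfrac12|I_w|^{1-1/k}$ holds (the sum over the children $w'$ of $w$), $m_w\to\infty$ along each branch, and — the decisive property — for every $g\in B(P_w,\varepsilon_w)$ and every $x\in I_w$ one has $|S_{m_w}g(x)|>(\log m_w)^{1-1/(\ell+1)}$ when $w$ has depth $\ell$ (using the milder exponent $1-1/k$ when $\ell<k$). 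Letting the centres $P_w$ of one generation run through a dense subset of $\mathcal C(\mathbb T)$ makes $\mathcal R_k:=\bigcap_\ell\bigcup_{\mathrm{depth}(w)=\ell}B(P_w,\varepsilon_w)$ a dense $G_\delta$. Any $f\in\mathcal R_k$ lies in a nested branch of balls, so it belongs to one $B(P_w,\varepsilon_w)$ at each depth; the Cantor set $C=\bigcap_\ell\bigcup_{\mathrm{depth}(w)=\ell}I_w$ has $\dim_{\mathcal H}C\ge 1-1/k$ by the usual mass-distribution lemma (the Frostman inequality, together with the very fast decay of $|I_w|$ along branches, yields a probability measure on $C$ with $\mu(I_w)\le|I_w|^{1-1/k-o(1)}$), while every $x\in C$ lies in intervals $I_w$ with $m_w\to\infty$ and $|S_{m_w}f(x)|>(\log m_w)^{1-1/(\ell+1)}$, so $\limsup_n\frac{\log|S_nf(x)|}{\log\log n}=1$, i.e. $C\subseteq F(1,f)$. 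Hence $\dim_{\mathcal H}F(1,f)\ge 1-1/k$ on $\mathcal R_k$, and intersecting over $k$ gives a residual set on which $\dim_{\mathcal H}F(1,f)=1$.

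The recursion is driven by the following \emph{saturation lemma}, which is the analytic core: given a ball $B(P_0,\eta)$ with $P_0$ a trigonometric polynomial, an interval $I_0$, a target exponent $\iota\in(0,1)$ with $K:=1/(1-\iota)\ge k$, and an integer $Q$, there exist integers $N<\lambda$ with $\lambda>\max(Q,\deg P_0)$ and $\lambda=N^{K}$, a trigonometric polynomial $R$ with $\|R\|_\infty<\eta$ and $\spe(R)\subseteq[\lambda-N,\lambda+N]$, and pairwise disjoint subintervals $I_1,\dots,I_r$ of $I_0$ with $\sum_i|I_i|^{1-1/k}\ge\tfrac12|I_0|^{1-1/k}$, such that $|S_\lambda(P_0+R)(x)|>(\log\lambda)^{\iota}$ for every $x\in\bigcup_iI_i$. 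The building block is a \emph{spatially modulated du Bois--Reymond block}: one takes $R$ to be an amplitude-$a$ multiple of $Q_N(\cdot-\xi)\cos\!\big(2\pi\lambda(\cdot-\xi)\big)$, where $\xi$ is the centre of $I_0$ and $Q_N(t)=\sum_{j=1}^N j^{-1}\sin(2\pi jt)$ is the (uniformly bounded) conjugate Dirichlet sum. A direct computation gives $S_\lambda\big[Q_N(\cdot-\xi)\cos(2\pi\lambda(\cdot-\xi))\big](\xi+\delta)=\tfrac12\,\mathrm{Im}\big(e^{-2\pi i\lambda\delta}\sum_{j=1}^N j^{-1}e^{2\pi ij\delta}\big)$; for $\delta$ in the range $[\lambda^{-1},N^{-1}]$ the modulus $|\sum_{j=1}^N j^{-1}e^{2\pi ij\delta}|$ is comparable to $\log N$, while the carrier phase $2\pi\lambda\delta$ runs through a full period on each $\lambda^{-1}$-interval, so the set $\{x\in I_0:|S_\lambda R(x)|\ge c\,a\,\log N\}$ is a union of $\asymp N^{K-1}$ intervals of length $\asymp N^{-K}$ spread with positive density over an interval of length $\asymp N^{-1}$ around $\xi$; this set has local dimension $1-1/K=\iota$ over the scale range $[N^{-K},N^{-1}]$, and $\sum_i|I_i|^{1-1/k}\asymp N^{K-1}(N^{-K})^{1-1/k}=N^{K/k-1}\ge 1$ gives the Frostman inequality exactly because $K\ge k$. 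Choosing $a\asymp(\log N)^{\iota-1}$ makes $c\,a\log N\asymp(\log\lambda)^{\iota}$; since $\lambda>\deg P_0$ we have $S_\lambda(P_0+R)=P_0+S_\lambda R$, so the bounded term $P_0$ is absorbed once $N$ is large, and taking $N$ large also secures $\lambda>Q$ and $r\ge 2$.

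Iterating the lemma, the amplitudes $a_\ell\asymp(\log N_\ell)^{\iota_\ell-1}$ are summable provided $\log N_\ell$ is chosen large enough (say $\log N_\ell\ge 2^{\ell k}$, with also $\log N_{\ell+1}\ge K_\ell\log N_\ell$ so that the next block fits inside a catching interval of length $\lambda_\ell^{-1}$ of the previous one), so after a fixed rescaling the superposition $\sum_\ell R_\ell$ has sup-norm below the prescribed radius; moreover the blocks have pairwise disjoint lacunary spectra, so $S_{\lambda_\ell}R_{\ell'}$ is $0$ for $\ell'>\ell$ and equals $R_{\ell'}$ (hence $\le a_{\ell'}$ in modulus) for $\ell'<\ell$, which means the ``firing'' of block $\ell$ at frequency $\lambda_\ell$ is never disturbed by later generations and is only additively perturbed by the bounded quantity $\sum_{\ell'}a_{\ell'}$. \textbf{The main obstacle} is precisely this balancing act and its geometric price: beating the Lebesgue constant forces $a_\ell$ to decay like a negative power of $\log N_\ell$, hence the carrier frequencies $\lambda_\ell=N_\ell^{K_\ell}$ to grow explosively and the catching intervals to sit at the minuscule scale $\lambda_\ell^{-1}$, and one must nevertheless arrange that these catching sets nest into a Cantor set of full dimension — which is exactly what letting $K_\ell=1/(1-\iota_\ell)\to\infty$ (equivalently $\iota_\ell\to 1$) delivers, since it drives the local dimension $1-1/K_\ell$ of the $\ell$-th catching set to $1$ while keeping the single Frostman exponent $1-1/k$ valid at every large depth. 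The remaining points — the precise density estimate for the modulated block and the lower bound on $|\sum_{j\le N}j^{-1}e^{2\pi ij\delta}|$, the openness and density needed to run the Baire scheme, the mass-distribution dimension bound, and passing from the complex computation to a real-valued $f$ — are routine.
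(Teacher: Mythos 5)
Your approach is genuinely different from the paper's: you replace the Kahane--Katznelson holomorphic construction (which is what the paper uses in Lemma \ref{LEMBASICCT}) by a spatially modulated du Bois--Reymond block, and you replace the mass transference principle (Corollary \ref{CORUBI2}) by a hand-built Cantor scheme with a Frostman lower bound. The du Bois--Reymond computation you sketch is correct, and the Baire scheme with shrinking nested balls is a legitimate way to get a residual set. However, there is a genuine gap in the passage from the Frostman sum to the dimension of $C$, and I do not think it is a ``routine'' point: it is where your construction and the paper's diverge most seriously.

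The problem is the \emph{concentration} of the catching set. In your block, the $r\asymp N^{K-1}$ catching intervals of length $\asymp\lambda^{-1}=N^{-K}$ all lie inside $[\xi-cN^{-1},\,\xi+cN^{-1}]$, i.e.\ in a window of length $\asymp N^{-1}$ around the centre of the parent interval $I_0$ of length $\asymp\lambda_{\ell-1}^{-1}$. The mass-distribution (Frostman) lemma requires $\mu\big(B(x,r)\big)\lesssim r^{s}$ for \emph{all} radii, not just $\mu(I_w)\lesssim|I_w|^{s}$. Take $x$ near the centre $\xi$ of a depth-$\ell$ interval $I_w$ and $r\approx N_{\ell+1}^{-1}$: then $B(x,r)$ absorbs essentially \emph{all} the mass of $I_w$, so $\mu\big(B(x,r)\big)\approx\mu(I_w)$, while $r^{s}\approx N_{\ell+1}^{-s}$. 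This forces an \emph{upper} bound $N_{\ell+1}\lesssim\mu(I_w)^{-1/s}$, which in your normalisation means roughly $N_{\ell+1}\lesssim\lambda_\ell^{1/s}$. Your written constraints go only in the opposite direction: you require $\log N_{\ell+1}\ge K_\ell\log N_\ell$ (i.e.\ $N_{\ell+1}\ge\lambda_\ell$, needed so the new block fits inside $I_w$) and, independently, $\log N_\ell\ge 2^{\ell k}$ ``so that the amplitudes are summable''. Nothing caps $N_{\ell+1}$ from above. If $N_{\ell+1}$ is chosen even slightly larger than $\lambda_\ell^{1/s}$, the measure $\mu$ you construct is too concentrated and the Frostman bound fails. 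So you cannot simply ``take $N$ large''; you must fit $N_{\ell+1}$ into the narrow window $[\lambda_\ell,\,\lambda_\ell^{1/s}]$ while \emph{simultaneously} guaranteeing the amplitude bound $a_\ell\asymp(\log N_\ell)^{\iota_\ell-1}\to 0$ with $\iota_\ell\to 1$. Because $K_\ell\to\infty$ slows the decay of $a_\ell$ dramatically, this is a delicate three-way balancing act between $K_\ell$, $\eta_\ell$ and $N_\ell$ that your sketch neither states nor resolves; the choice $\log N_\ell\ge 2^{\ell k}$ together with a reasonable $\eta_\ell\to 0$ will in general violate the necessary upper bound.

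The paper avoids this difficulty entirely: the Kahane--Katznelson polynomial fires near \emph{every} point $j/k$, $j=0,\ldots,k-1$, so the catching set $I_k^\beta$ is spread uniformly over $[0,1]$. This makes $\limsup$ of the enlarged catching sets of full Lebesgue measure, which is exactly the hypothesis of the mass transference principle (Corollary \ref{CORUBI2}), and the mass transference then gives the lower Hausdorff (even $\phi$-Hausdorff) bound with no Frostman bookkeeping at all. Two further comparative remarks. First, the paper proves Theorem \ref{THMCT1} as a corollary of Theorem \ref{THMCT3}: the mass transference route gives the precised Hausdorff dimension for free, which your Frostman argument (even when repaired) would not, because $\mathcal H^s(C)$ is estimated only up to the $o(1)$ losses in the exponent. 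Second, to run your Baire argument you would also want to check that a single function $f$ lying on a branch has $\limsup\frac{\log|S_nf(x)|}{\log\log n}$ exactly $1$ rather than merely $\ge 1$; you do observe that the Lebesgue constant bound supplies the upper bound for every $x$, which is correct and does the job. The essential issue remains the dimension lower bound: as written, the step from the Frostman inequality $\sum_{w'}|I_{w'}|^{1-1/k}\ge\frac12|I_w|^{1-1/k}$ to $\dim_{\mathcal H}C\ge 1-1/k$ is not valid without an explicit \emph{upper} bound on the localisation scale $N_{\ell+1}^{-1}$ relative to $|I_w|$, and supplying that bound consistently with the other growth conditions requires a careful (and in your sketch absent) choice of the sequences $(K_\ell)$, $(\eta_\ell)$, $(N_\ell)$.
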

Thus, for quasi-all functions $f\in\mathcal C(\TT)$, the partial sums
$(S_nf(x))_{n\ge 0}$ grow as fast as possible on big sets.

We can also study the domain of the spectrum of singularities of $f$, namely the values of $\beta$ such that $F(\beta,f)$ is non-empty.
Like in the case of the space $L^p(\TT)$, this domain is for quasi-all functions of $\mathcal C(\mathbb T)$ an interval with non-empty
interior, so that a typical function $f$ in $\mathcal C(\mathbb T)$ has a multifractral behavior with respect to the summation
of its Fourier series. 
However, the spectrum of singularities is constant!
\begin{theorem}\label{THMCT2}
For quasi-all functions $f\in\mathcal C(\TT)$, for any $\beta\in[0,1]$, $F(\beta,f)$ is non-empty
and has Hausdorff dimension 1.
\end{theorem}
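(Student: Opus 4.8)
The plan is to establish the result as a combination of two generic statements, one concerning upper bounds (which is essentially for free) and one concerning lower bounds (which carries all the difficulty). For the upper bound, recall that $\|S_nf\|_\infty \le C\|f\|_\infty \log n$, so for \emph{every} $f\in\mathcal C(\TT)$ and every $x$, $\limsup_n \frac{\log|S_nf(x)|}{\log\log n}\le 1$; hence $\dim_{\mathcal H}(F(\beta,f))\le 1$ trivially, and $F(\beta,f)=\emptyset$ for $\beta>1$. So the whole content is: for quasi-all $f$, for every $\beta\in[0,1]$, $F(\beta,f)\ne\emptyset$ and $\dim_{\mathcal H}(F(\beta,f))\ge 1$. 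I would fix a countable dense set $\{\beta_k\}$ of rationals in $[0,1]$ together with $\beta=1$ and $\beta=0$, prove genericity of "$\dim_{\mathcal H}(F(\beta_k,f))\ge 1$" for each $k$ (a countable intersection of residual sets is residual), and then recover \emph{all} $\beta\in[0,1]$ by an approximation/limsup-gluing argument: given a target $\beta$, pick $\beta_{k_j}\uparrow\beta$ and build points $x$ at which the relevant limsup is exactly $\beta$ by interlacing the scales realizing the $\beta_{k_j}$'s, checking that the dimension estimate survives the construction.

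The core is therefore a genericity statement, which I would attack with the Baire category method in the form already used for Theorems \ref{THMLP} and \ref{THMCT1}. I would seek a sequence of open dense sets $\mathcal O_{N}\subset\mathcal C(\TT)$ whose intersection forces the conclusion, each $\mathcal O_N$ being built from explicit "resonant" trigonometric polynomials. The key building block is a normalized Dirichlet-type kernel adapted to a point: for a dyadic-like point $x_0$ and a large $n$, one has $|S_n(\cdot)|$ of the modulated Dirichlet kernel comparable to $\log n$ at $x_0$, and more importantly one can, by translating such a kernel to a large collection of well-separated points, make $|S_nf(x)|\gtrsim (\log n)^{\beta}$ simultaneously on a set that looks like a $(\approx n^{-\theta})$-net for a suitable $\theta=\theta(\beta)$ — precisely the Cantor-type construction underlying the "converse" part of Theorem \ref{THMAUBRY}, but now with $(\log n)^\beta$ in place of $n^\beta$. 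Since $(\log n)^\beta$ grows so slowly, the cost in $L^\infty$ (or rather the number of active frequencies) is mild, and one can afford to spread the bumps over a set of \emph{full} dimension rather than dimension $1-\beta p$; this is exactly why the spectrum becomes constant. I would iterate this over a rapidly increasing sequence $n_1\ll n_2\ll\cdots$, at each stage perturbing $f$ by a small polynomial supported on frequencies near $n_j$, so that in the limit the generic $f$ has $|S_{n_j}f(x)|\gtrsim (\log n_j)^{\beta}$ for $x$ in a nested sequence of $(\approx n_j^{-\theta})$-nets whose limit Cantor set $K_\beta$ has Hausdorff dimension $1$; a standard mass-distribution argument on $K_\beta$ gives $\dim_{\mathcal H} K_\beta=1$.

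One then has to verify the matching \emph{upper} bound on the limsup along $K_\beta$, i.e. that the generically constructed $f$ does not accidentally overshoot: $\limsup_n \frac{\log|S_nf(x)|}{\log\log n}\le\beta$ for $x\in K_\beta$, so that $K_\beta\subset F(\beta,f)$ and not merely $K_\beta\subset\bigcup_{\gamma\ge\beta}F(\gamma,f)$. This is handled by controlling the tails of the construction — the perturbations at stages $>j$ have $L^\infty$-norm summably small and are spectrally localized, so their contribution to $S_n f(x)$ is uniformly bounded — together with the a priori bound $|S_nf(x)|\le C\|f\|_\infty\log n$ for $n$ not of the form $n_j$. The main obstacle, and the place where real work is needed, is the simultaneous bookkeeping in the iterative step: one must choose the frequency windows, the translation sets (the nets), and the amplitudes so that (i) the new resonances at scale $n_{j+1}$ do not destroy the resonances already created at scales $n_1,\dots,n_j$, (ii) the perturbation keeps $f$ in a prescribed small ball (to stay in the relevant $\mathcal O_N$ and to make the limit exist), and (iii) the nets nest properly and their intersection has dimension $1$. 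Once this combinatorial/analytic juggling is set up cleanly — and it is a slow-growth variant of the mechanism already in place for Theorem \ref{THMAUBRY}'s converse — the rest is routine.
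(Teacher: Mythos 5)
Your proposal correctly identifies the shape of the argument — upper bound is free, the content is a Baire-category lower bound with resonant polynomials — but it has a genuine gap at precisely the step you describe as ``handled'' and ``routine'', namely controlling the limsup from \emph{above} along the Cantor set $K_\beta$ so that $K_\beta\subset F(\beta,f)$ rather than merely $K_\beta\subset\bigcup_{\gamma\ge\beta}F(\gamma,f)$. For a generic $f$ obtained by the Baire method, $f$ is only constrained to lie near the special polynomials $g_j$ for \emph{infinitely many} $j$, not all $j$; between those scales $f$ is arbitrary, and indeed the theorem itself forces the overshoot sets $F(\gamma,f)$, $\gamma>\beta$, to have Hausdorff dimension $1$ for quasi-all $f$. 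So one cannot subtract the overshoot from the limsup set at the level of Hausdorff dimension: if $F=\{x:\limsup\ge\beta\}$ has dimension $1$ and the overshoot $F^2=\{x:\limsup>\beta\}$ also has dimension $1$, nothing follows about $\dim_{\mathcal H}(F\setminus F^2)$. The paper's way around this is exactly the purpose of Theorem~\ref{THMCT3}: one works with the \emph{precised} Hausdorff dimension, shows via Hunt's weak-type maximal inequality (Lemma~\ref{LEMCTMAJO}) that $\mathcal H^{\phi_{1,1-\beta}}(F^2)=0$, and uses the mass transference principle to get $\mathcal H^{\phi_{1,1-\beta}}(F)=\infty$, whence $\mathcal H^{\phi_{1,1-\beta}}(F\setminus F^2)=\infty$ and the dimension is $1$. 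The authors note this obstruction explicitly (see the remark after the proof): the Hausdorff dimension alone is too coarse to separate $F^1$ from $F^2$, and Theorem~\ref{THMCT2} is genuinely \emph{easier} to prove as a corollary of the finer Theorem~\ref{THMCT3} than directly.

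There is a second, more technical gap. Translates of a modulated Dirichlet kernel do not produce resonance regions wide enough to form a limsup set of dimension $1$: around a translate, the set where $|S_nP|\gtrsim(\log n)^\beta$ has width of order $1/n$ (up to logarithms), so a union over $\approx n^\theta$ translates with spacing $\approx n^{-\theta}$ gives a limsup set of dimension $\theta<1$. This is exactly the obstruction the paper points out just before Lemma~\ref{LEMBASICCT} ($|(S_NP)'(0)|\sim N$ is far too large). The construction must instead use the Kahane--Katznelson device: one takes the imaginary part of the Fej\'er sum of $\log f$ where $f$ is a sum of Poisson-type kernels at $k$ equispaced points, which keeps $|S_nP|\gtrsim(\log n)^\beta$ on intervals of width comparable to $n^{-1/3}$ around each of $\approx n^{1/3}$ net points — far wider than the $1/n$ scale Dirichlet kernels afford. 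These two issues (the upper-bound decomposition and the width of the resonance intervals) are where the actual content of the proof lives, and they are not addressed by the proposal as written.
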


Theorem \ref{THMCT2} indicates that the Hausdorff dimension is not precise enough to measure the size of the level sets $F(\beta,f)$.
This leads us to introduce a notion of \emph{precised Hausdorff dimension}, in
order to distinguish more finely sets which have the same
Hausdorff dimension. For $s> 0$ and $t\in(0,1]$, we consider 
$$\phi_{s,t}(x)=x^s \exp\left[(\log 1/x)^{1-t}\right].$$

\begin{definition}
Let $E\subset\mathbb R^d$. We say that $E$ has \emph{precised Hausdorff
  dimension} $(\alpha,\beta)$ if $\alpha$ is the Hausdorff dimension of $E$ and  
\begin{itemize}
\item $\beta=0$ if $\mathcal H^{\phi_{\alpha,t}}(E)=0$ for every $t\in(0,1)$;
\item $\beta=\sup\big\{t\in(0,1);\ \mathcal H^{\phi_{\alpha,t}}(E)>0\big\}$ otherwise.
\end{itemize}
\end{definition}
It is not difficult to check that $\phi_{s,t}(x)\leq\phi_{s',t'}(x)$ for small values of $x$ iff
$$s>s'\textrm{ or }(s=s'\textrm{ and }t\geq t').$$
Thus the precised Hausdorff dimension is a refinement of the Hausdorff
dimension. In particular it is a tool to classify sets that have the same
Hausdorff dimension. The natural order for the precised dimension $(s,t)$ is
the lexicographical order which will be denoted by $\prec$. 
With respect to this order, we can say that the
greater is the set, the greater is the precised dimension. Moreover, if
$(s,t)\prec (s',t')$ and $(s,t)\not= (s',t')$, then  $\phi_{s',t'}\ll
\phi_{s,t}$. It follows that 
$H^{\phi_{s',t'}}(E)=0$ as soon as $H^{\phi_{s,t}}(E)<\infty$.

\smallskip

Our main theorem on $\mathcal C(\mathbb T)$, which contains both Theorems \ref{THMCT1} and \ref{THMCT2}, is the following:
\begin{theorem}\label{THMCT3}
For quasi-all functions $f\in\mathcal C(\mathbb T)$, for any $\beta\in[0,1]$, the precised Hausdorff dimension of $F(\beta,f)$ is $(1,1-\beta)$.
\end{theorem}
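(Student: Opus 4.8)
The plan is to prove Theorem \ref{THMCT3} by the standard residuality scheme: exhibit the target set of functions as a countable intersection of dense open sets, so that the conclusion follows from the Baire category theorem in the complete metric space $\mathcal C(\TT)$. The statement asserts two things for each $\beta\in[0,1]$ and quasi-all $f$: an upper bound (the precised Hausdorff dimension of $F(\beta,f)$ is at most $(1,1-\beta)$, i.e.\ $\mathcal H^{\phi_{1,t}}(F(\beta,f))=0$ for $t>1-\beta$) and a lower bound (it is at least $(1,1-\beta)$, i.e.\ $\mathcal H^{\phi_{1,t}}(F(\beta,f))>0$ for $t<1-\beta$, and in particular $F(\beta,f)\ne\emptyset$). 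Since it suffices to handle a countable dense set of parameters $\beta$ (and then interpolate, using that $t\mapsto\phi_{1,t}$ is monotone and that the sets $F(\beta,f)$ for nearby $\beta$ are nested in an appropriate limsup sense), I would fix rational $\beta$ throughout the construction and take a countable intersection over $\beta\in\QQ\cap[0,1]$ at the end.

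For the \textbf{lower bound}, the key is a construction of building-block functions. I would start from the classical fact, underlying the failure of uniform convergence in $\mathcal C(\TT)$, that there exist trigonometric polynomials $P$ with $\|P\|_\infty\le 1$ but $\sup_{n}|S_nP(0)|\ge c\log(\deg P)$ (conjugate-Dirichlet / lacunary constructions). By translating and rescaling, for any finite set of points $x_1,\dots,x_N$ one builds a polynomial of controlled sup-norm whose partial sums reach order $\log n$ at each $x_j$ for a suitable range of $n$. To get the precise rate $(\log n)^\beta$ rather than $\log n$, I would superpose such blocks at lacunary scales $n_k$ with amplitudes $\sim (\log n_k)^{\beta-1}$, so that at a point belonging to infinitely many block-supports one has $\limsup (\log n)^{-\beta}|S_nf(x)|>0$ but the limsup with exponent $\beta'>\beta$ vanishes. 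To control the Hausdorff content precisely — not merely the dimension — the set of points on which this happens should be arranged as a Cantor-like set whose construction at stage $k$ uses roughly $n_k^{1-o(1)}$ translated copies spread $\sim 1/n_k$ apart, and a mass-distribution (Frostman) argument with the gauge $\phi_{1,t}$ gives $\mathcal H^{\phi_{1,t}}>0$ exactly when $t<1-\beta$; the logarithmic correction in $\phi_{1,t}$ is precisely what is produced by the $(\log n_k)$-type losses in the sup-norm estimates for the blocks. Openness and density of the corresponding approximation conditions ``$\exists n$ with many well-separated points where $|S_nf|\ge (\log n)^\beta$'' are routine, because $f\mapsto S_nf(x)$ is continuous and polynomials are dense.

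For the \textbf{upper bound}, I would use the quantitative pointwise estimate $|S_nf(x)|\le C\|f\|_\infty\log n$ together with a more refined maximal/covering argument. Since for quasi-all $f$ we cannot use smoothness of $f$, the upper bound must come from a soft counting argument: the residual set is designed so that, for each $m$, $\|S_nf-S_nf_m\|_\infty$ is small where $f_m$ is a polynomial approximant, hence the set where $|S_nf(x)|$ is large is essentially where $|S_n f_m(x)|$ is large, and for a fixed polynomial $S_nf_m$ has at most $O(n)$ sign changes/extrema, so the ``bad'' set at scale $n$ is covered by $O(n)$ intervals of length $O(1/n)$. Taking $\limsup$ over $n$ and summing $\phi_{1,t}$ over such covers, the gauge $\phi_{1,t}(1/n)=n^{-1}\exp[(\log n)^{1-t}]$ against a count $O(n)$ gives a finite/zero total precisely when the extra $(\log n)$-factors coming from the divergence rate $(\log n)^\beta$ can be absorbed, i.e.\ for $t>1-\beta$. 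This half is where the precised gauge is genuinely needed and must be handled carefully.

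The main obstacle I anticipate is the lower bound at the level of the \emph{precised} dimension: getting $\mathcal H^{\phi_{1,t}}(F(\beta,f))>0$ for every $t<1-\beta$ for a \emph{single} residual $f$ requires the Cantor construction to be robust under the Baire-category passage, i.e.\ the ``charging'' of mass onto the limsup set must survive intersecting over infinitely many open conditions and over all $t<1-\beta$ simultaneously. Concretely one must choose the lacunary scales $n_k$ growing fast enough that the measure surviving at each stage loses only a subexponential (in $\log n_k$) factor, matching $\exp[(\log n_k)^{1-t}]$, and one must verify that membership in $F(\beta,f)$ (an exact-limsup condition, not just a $\limsup>0$) holds on a set of full $\phi_{1,t}$-measure — this last point is handled by also imposing the matching \emph{upper} rate $(\log n)^{\beta+\veps}$ on the same Cantor set, which is where Theorem \ref{THMCT1}'s upper-bound machinery is reused pointwise. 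Once the two bounds are matched parameter by parameter and one checks that the lexicographic monotonicity of $\phi_{s,t}$ lets a countable dense set of $\beta$'s suffice, the theorem follows.
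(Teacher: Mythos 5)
Your high-level scheme (build block polynomials, assemble a residual set by Baire, prove matching upper and lower bounds on $\mathcal H^{\phi_{1,t}}(F(\beta,f))$) is the right one, and your observation that the precised gauge is needed to make the ``exactly $\beta$'' conclusion meaningful is correct. However, both of your key analytic steps contain genuine gaps.

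\textbf{The upper bound as you describe it does not work.} You propose a soft counting argument: for $f$ close to a polynomial $f_m$, the set $\{|S_nf|>y\}$ is essentially $\{|S_nf_m|>y\}$, which you would cover by $O(n)$ intervals of length $O(1/n)$ because $S_nf_m$ has degree $n$. But the degree of a trigonometric polynomial controls only the \emph{number} of components of a superlevel set, not their total \emph{length}; a degree-$n$ polynomial with $\|\cdot\|_\infty\le 1$ can exceed $1/2$ on a set of measure arbitrarily close to $1$. Thus the cover you describe has total $\phi_{1,t}$-content $O(n)\cdot\phi_{1,t}(1/n)$, which does not decay. What is actually needed (and what the paper uses in Lemma \ref{LEMCTMAJO}) is the weak-type exponential maximal inequality for $\mathcal C(\TT)$ due to Hunt, $\lambda(\{S^*f>y\})\le Ae^{-By/\|f\|_\infty}$, combined with Bernstein's inequality to control the length of a single interval around a bad point, and a Vitali covering. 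The exponential decay in $y$ is precisely what beats the $\exp[(\log 1/x)^{1-t}]$ factor in the gauge when $t>1-\beta$; without a hard maximal estimate there is no decay at all. Note also that this upper bound holds for \emph{every} $f\in\mathcal C(\TT)$, not just residual ones, which is essential for the final subtraction step.

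\textbf{The lower-bound block construction is also incomplete.} Conjugate-Dirichlet polynomials achieve $|S_NP(0)|\sim\tfrac12\log N$ while $\|P\|_\infty$ stays bounded, but $|(S_NP)'(0)|$ grows like $N$, so the divergence survives only on an interval of length $O(1/N)$ around a \emph{single} point; translating and summing such blocks over a Cantor-type set of $\sim N^{1-o(1)}$ points forces you to add $\sim N^{1-o(1)}$ functions each of sup-norm $\sim 1$, destroying the bound $\|P\|_\infty\le 1$. The crucial ingredient in the paper (Lemma \ref{LEMBASICCT}) is the Kahane--Katznelson construction: take $f(z)=\frac1k\sum_j\frac{1+\veps}{1+\veps-\overline{z_j}z}$, pass to $h=\log f$ (which has bounded imaginary part but large real part near the points $z_j$), Fej\'er-sum $h$, and use the algebraic identity $|S_n(e_n\Im m\,Q)|=\tfrac12|Q|$ to convert a control on the imaginary part into a sup-norm bound while $|S_nP|$ is large on the union of $k$ intervals of total measure close to one ``logarithm-weight'' below $1$. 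This mechanism for getting large partial sums on large sets while keeping the sup-norm bounded is the heart of the lower bound, and your blocks do not have it.

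Two further points that deserve attention but are less severe. First, the sets $F(\beta,f)$ are level sets of an exact limsup and are not nested in $\beta$, so ``interpolating from a countable dense set of $\beta$'s'' is not automatic; the paper instead inserts, into the $j$-th approximant, blocks for \emph{all} of $\beta_1,\dots,\beta_j$ (a dense sequence) and then, for an arbitrary $\beta$, selects $\beta_{l_p}\uparrow\beta$ and exploits the inclusion $I_k^\beta\subset I_k^{\beta_{l_p}}$; this yields the lower bound for every $\beta$ simultaneously. Second, to get the \emph{exact} rate $\beta$ (not merely $\ge\beta$) on a set of infinite $\phi_{1,1-\beta}$-measure, it is cleaner to prove $\mathcal H^{\phi_{1,1-\beta}}(F)=+\infty$ on the full limsup set $F$ and then to apply Lemma \ref{LEMCTMAJO} to the subset where the limsup exceeds $\beta$ (showing it has $\phi_{1,1-\beta}$-measure zero), rather than building an upper rate into the Cantor set as you suggest. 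Finally, your Frostman-measure route to $\mathcal H^{\phi_{1,t}}>0$ is a legitimate alternative to the mass transference principle (Corollary \ref{CORUBI2}) that the paper uses; the latter is essentially a black-boxed version of the same idea and is more economical here, but this is a stylistic choice rather than a gap.
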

\medskip

The paper is organized as follows. In the remaining part of this section, we introduce tools which will be needed during the rest of the paper.
In Section \ref{SECLP}, we prove Theorem \ref{THMLP} whereas in Section \ref{SECCT}, we prove Theorem \ref{THMCT3}. 

\subsection{A precised version of Fej\'er's theorem}

Working on Fourier series, we will need results on approximation by trigonometric polynomials.
Let $k\in\mathbb Z$ and $e_k:t\mapsto e^{2\pi i kt}$, so that, for any $g\in L^1(\TT)$
and any $n\in\mathbb N$, 
$$S_ng:t\mapsto \sum_{k=-n}^n \langle g,e_k\rangle e_k(t).$$
Let $\sigma_ng$ be the $n-$th Fej\'er sum of $g$,
$$\sigma_n g:t\mapsto \frac 1n\sum_{k=0}^{n-1}S_kg(t).$$
$\sigma_n g$ is obtained by taking the convolution of $g$ with the Fej\'er kernel 
$$F_n:t\mapsto \frac 1n\left(\frac{\sin(n\pi t)}{\sin(\pi t)}\right)^2.$$
If $g$ belongs to $\mathcal C(\TT)$, $(\sigma_n g)_{n\ge 1}$ converges uniformly to $g$. For our purpose, we need to estimate how quick is the convergence. This is the content
of the next lemma (part (1) rectifies a mistake in the proof of Lemma 12
in \cite{Aub06} and requires to replace $\|\theta\|_\infty/4$ in Aubry's
version  by $\|\theta\|_\infty/2$).

\begin{lemma}\label{LEMFEJER}Let $\theta$ be a Lipschitz
    function on $\TT$, let $n\in\mathbb N$ and let $x\in\mathbb T$.  Suppose that $\|\theta'\|_\infty\le n$
    and that $\theta(x)=0$. Then the two following inequalities hold:
\begin{eqnarray}\label{EQFEJER1}
|\sigma_{n}\theta(x)|\le\frac14+\frac12\|\theta\|_\infty\ \textrm{ for any }n\geq 8\\
|\sigma_{n}\theta(x)|\le 4+\frac14\|\theta\|_\infty\ \textrm{ for any }n\geq 4.\label{EQFEJER2}
\end{eqnarray}
\end{lemma}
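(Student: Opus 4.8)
The plan is to estimate $\sigma_n\theta(x)=(\theta*F_n)(x)=\int_{-1/2}^{1/2}\theta(x-t)F_n(t)\,dt$ directly, using $\theta(x)=0$ to rewrite it as $\int_{-1/2}^{1/2}\bigl(\theta(x-t)-\theta(x)\bigr)F_n(t)\,dt$ and then bounding the integrand by absolute values (recall $F_n\ge 0$). The gain from this rewriting is that the difference $\theta(x-t)-\theta(x)$ enjoys two complementary estimates: the Lipschitz bound $|\theta(x-t)-\theta(x)|\le\|\theta'\|_\infty|t|\le n|t|$, useful for $t$ near $0$, and the pointwise bound $|\theta(x-t)-\theta(x)|=|\theta(x-t)|\le\|\theta\|_\infty$, useful away from $0$. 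It is precisely the hypothesis $\theta(x)=0$ that replaces the generic bound $2\|\theta\|_\infty$ by $\|\theta\|_\infty$ here, and this is what makes the coefficient $\tfrac12$ (and not $1$) attainable.

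Next I would fix a cut-off $\delta\in(0,\tfrac12)$ and split the integral over $\{|t|\le\delta\}$ and $\{\delta<|t|\le\tfrac12\}$. On the inner region, combining $|\theta(x-t)-\theta(x)|\le n|t|$ with the elementary bound $F_n(t)\le n$ (a consequence of $|\sin(n\pi t)|\le n|\sin(\pi t)|$) gives $\int_{|t|\le\delta}|\theta(x-t)-\theta(x)|F_n(t)\,dt\le n^2\int_{|t|\le\delta}|t|\,dt=(n\delta)^2$. On the outer region, combining $|\theta(x-t)-\theta(x)|\le\|\theta\|_\infty$ with $F_n(t)\le\frac1{n\sin^2(\pi t)}$, then integrating the antiderivative $\int\csc^2(\pi t)\,dt=-\tfrac1\pi\cot(\pi t)$ and using $\cot(\pi\delta)\le\tfrac1{\pi\delta}$ (valid since $\pi\delta<\tfrac\pi2$), gives $\int_{\delta<|t|\le1/2}F_n(t)\,dt\le\frac{2\cot(\pi\delta)}{\pi n}\le\frac{2}{\pi^2\,n\delta}$. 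Putting the two pieces together,
$$|\sigma_n\theta(x)|\le(n\delta)^2+\frac{2}{\pi^2\,n\delta}\,\|\theta\|_\infty .$$

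It then remains to choose the product $n\delta$. For \eqref{EQFEJER1} I would take $\delta=\tfrac1{2n}$, so that $(n\delta)^2=\tfrac14$ and $\frac{2}{\pi^2 n\delta}=\frac4{\pi^2}<\tfrac12$. For \eqref{EQFEJER2} I would take $\delta=\tfrac1n$, so that $(n\delta)^2=1\le4$ and $\frac{2}{\pi^2 n\delta}=\frac2{\pi^2}<\tfrac14$. In both cases $\delta<\tfrac12$ for $n$ in the stated range, which is exactly what is needed for the outer region to be non-empty and for the cotangent estimate to hold.

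The only point where the numerology is tight is the outer estimate for \eqref{EQFEJER1}: getting the coefficient $\tfrac12$ requires $n\delta\ge\frac4{\pi^2}$, whereas keeping the constant term $(n\delta)^2$ at most $\tfrac14$ requires $n\delta\le\tfrac12$, and these are simultaneously possible only because $\frac4{\pi^2}<\tfrac12$, i.e. $\pi^2>8$. In particular the cruder estimate $F_n(t)\le\frac1{4nt^2}$ (from $\sin(\pi t)\ge 2t$) would yield only the coefficient $\frac1{2n\delta}$, which is of order $1$ at $\delta=\tfrac1{2n}$ and hence useless for \eqref{EQFEJER1}; one really has to use the exact cotangent primitive. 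For \eqref{EQFEJER2} the corresponding balancing ($\frac8{\pi^2}\le n\delta$ versus $(n\delta)^2\le4$) is comfortable.
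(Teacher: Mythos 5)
Your proof is correct, and it reaches both bounds by a cleaner route than the paper does. Both arguments share the same skeleton: write $\sigma_n\theta(x)=\int(\theta(x-t)-\theta(x))F_n(t)\,dt$, split at a cut-off, use the Lipschitz bound $|\theta(x-t)-\theta(x)|\le n|t|$ on the inner piece and $|\theta(x-t)-\theta(x)|\le\|\theta\|_\infty$ on the outer piece. The difference lies in which estimate of $F_n$ is applied where, and this forces a different cut-off. You take the crude bound $F_n\le n$ on the inside, which gives $(n\delta)^2$; to keep this at $\tfrac14$ you must take $\delta=\tfrac1{2n}$, smaller than the paper's cut $\tfrac1n$, so your outer piece is larger, and you absorb it by integrating $\csc^2(\pi t)$ exactly, obtaining $\tfrac{2}{\pi^2n\delta}\|\theta\|_\infty=\tfrac{4}{\pi^2}\|\theta\|_\infty<\tfrac12\|\theta\|_\infty$. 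The paper instead uses the weaker pointwise bound $F_n(y)\le\tfrac1{4ny^2}$ on the outside (which at $\delta=\tfrac1{2n}$ would only give $\|\theta\|_\infty$, too big, as you note); it therefore keeps the cut-off at $\tfrac1n$ and compensates on the inside by showing that $u_n:=2\int_0^{1/n}y\bigl(\sin(n\pi y)/\sin(\pi y)\bigr)^2\,dy$ is non-increasing in $n$ (a convexity argument plus a change of variable) and then checking numerically that $u_8\approx 0.2496\le\tfrac14$. For (\ref{EQFEJER2}), where there is plenty of slack, both proofs just use $F_n\le n$ on the inside. A minor bonus of your version: you get (\ref{EQFEJER1}) already for $n\ge 2$ and (\ref{EQFEJER2}) for $n\ge 3$, with no monotonicity lemma and no numerical verification; the restriction $n\ge 8$ in the statement is an artifact of the paper's reliance on $u_8\le\tfrac14$.
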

\begin{proof}
 We may assume that $x=0$. Hence,
$\sigma_{n}\theta(0)=\int_{-1/2}^{1/2}\theta(y)F_{n}(y)\,dy$. Let us consider $\delta\in(0,2]$ and $n\ge 4$.
On the one hand, for any $y\in[0,1/2)$,
$$0\leq F_{n}(y)=\frac{\sin^2(n\pi y)}{n\sin^2(\pi
  y)}\leq\frac1{n(2y)^2}$$ 
so that 
$$\left|\int_{\delta/n<|y|\le1/2}\theta(y)F_{n}(y)\,dy\right|\leq
\frac{1}{2n}\|\theta\|_\infty\int_{\delta/n}^{+\infty}\frac{dy}{y^2}=\frac{\|\theta\|_\infty}{2\delta}.$$
On the other hand, 
$$\left|\int_{-\delta/n}^{\delta/n}\theta(y)F_{n}(y)\,dy\right|\leq
2\int_0^{\delta/n}\left(\frac{\sin (n\pi y)}{\sin(\pi y)}\right)^2y\,dy := u_n.$$
Using the convexity inequality $\sin\left(\frac{n}{n+1}\pi y\right)\geq
\frac{n}{n+1}\sin(\pi y)$ and a change of variables, we see that $(u_n)$ is
non-increasing. To prove (\ref{EQFEJER1}), we choose $\delta=1$ and we observe that $u_8=0.2496...\leq \frac14$.
To prove (\ref{EQFEJER2}), we choose $\delta=2$ and we observe that, since the maximum of $F_n$ is $F_n(0)=n$,
$$|u_n|\leq 2n^2\int_0^{2/n} ydy=4.$$
\end{proof}

\subsection{The mass transference principle}

The second main tool that we need in this paper is a method to produce sets with large Hausdorff dimension (Theorem \ref{THMLP})
or with large precised Hausdorff dimension (Theorem \ref{THMCT3}). An efficient way to do this
is to consider \emph{ubiquitous systems} like this was done in \cite{DMPV95,Jaf00b}. This was later refined in \cite{BV06} to obtain a general mass transference principle, which we recall in the form that we need.

% \begin{theorem}\label{THMUBIQUITY}
% Let $(x_n)_{n\ge 0}$ be a sequence of points in $[0,1]^d$ and let $(r_n)_{n\ge 0}$
% be a sequence of positive real numbers decreasing to 0. For every 
% $\alpha\ge 1$, define
% $$E_\alpha=\limsup_n B(x_n,r_n^\alpha)=\bigcap_{N\ge 0}\bigcup_{n\ge
%   N}B(x_n,r_n^\alpha)$$ 
% and suppose that almost every point of $[0,1]^d$ (in the sense of the
% Lebesgue's measure) lies in $E_1$. Then,
% $$\forall\alpha\ge 1,\quad \dim_\mathcal{H}(E_\alpha)\ge\frac{d}\alpha.$$
% More precisely, if $\psi(s)=s^{d/\alpha}(\log 1/s)^2$, $\mathcal{H}^\psi(E_\alpha)>0$.
% \end{theorem}
% 
% Theorem \ref{THMUBIQUITY} has been proven powerful for instance:
% \begin{itemize}
% \item in diophantine approximation, to get results improving the famous Jarnik theorem 
% (see \cite{Mat95,BS11});
% \item to determine the size of the level sets of the H\"older exponent of a typical function in a Besov space (see \cite{Jaf00a}).
% \end{itemize}
% In our context, it is exactly what we need to prove Theorem \ref{THMLP}. However, it is not powerful enough for Theorem \ref{THMCT3},
% where we need to give a lower bound for the precised Hausdorff dimension. Thus, we are going to prove an "abstract" version of the
% ubiquity Theorem \ref{THMUBIQUITY}.

\begin{theorem}[The mass transference principle]\label{THMUBI2}
Let $(x_n)_{n\ge 0}$ be a sequence of points in $[0,1]^d$ and let $(r_n)_{n\ge 0}$
be a sequence of positive real numbers decreasing to 0. Let also $\phi:\mathbb R_+\to\mathbb R_+$ 
be a dimension function satisfying $\phi(s)\gg s^d$ when $s$
goes to 0 and $s^{-d}\phi$ is monotonic. Define
\begin{eqnarray*}
E&=&\limsup_n B(x_n,r_n)\\
E^\phi&=&\limsup_n B\left(x_n,\phi^{-1}(r_n^d)\right)
\end{eqnarray*}
and suppose that almost every point of $[0,1]^d$ (in the sense of the
Lebesgue's measure) lies in $E$. Then,
$\mathcal{H}^\phi(E^\phi)=+\infty$.
\end{theorem}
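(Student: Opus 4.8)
This statement is the Beresnevich--Velani mass transference principle of \cite{BV06}, so the plan is simply to reproduce the architecture of that proof; I will lay out its structure and single out the one genuinely hard ingredient. Throughout, $\mathcal{H}^d$ is a fixed multiple of Lebesgue measure on $\mathbb{R}^d$, so the hypothesis that almost every point of $[0,1]^d$ lies in $E$ says precisely that $\mathcal{H}^d(E\cap B)=\mathcal{H}^d(B)$ for every ball $B\subset[0,1]^d$; moreover $\phi(s)\gg s^d$ forces $\phi^{-1}(r_n^d)<r_n$ for large $n$, hence $E^\phi\subset E$, and it is exactly this hypothesis that makes $\mathcal{H}^\phi$ of a nondegenerate cube equal to $+\infty$, so that the conclusion is not vacuous.

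The first step is to reduce the statement to a local, quantitative one: it suffices to produce a constant $c>0$, depending only on $d$ and on $\phi$, such that $\mathcal{H}^\phi(E^\phi\cap B)\ge c\,\phi(r(B))$ for every ball $B\subset[0,1]^d$. Indeed, fitting $\asymp\delta^{-d}$ pairwise disjoint balls of radius $\delta$ into $[0,1]^d$ and summing this estimate over them gives $\mathcal{H}^\phi(E^\phi)\ge c'\,\delta^{-d}\phi(\delta)\to+\infty$ as $\delta\to 0$, since $s^{-d}\phi(s)\to\infty$. To establish the local estimate I would invoke the mass distribution principle: it is enough to construct, inside a given ball $B$, a compact set $K\subset E^\phi\cap B$ together with a finite Borel measure $\mu$ carried by $K$ with $\mu(K)\asymp\phi(r(B))$ and $\mu(A)\le c\,\phi(r(A))$ for \emph{every} ball $A$; then $\mathcal{H}^\phi(K)\ge\mu(K)/c$.

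The set $K$ would be obtained by a generalized Cantor construction whose building blocks at each generation are contracted balls $B(x_n,\phi^{-1}(r_n^d))$; the descent from one block $L$ to its children exploits the hypothesis: since $E$ has full Lebesgue measure inside $L$, Vitali's covering theorem yields finitely many pairwise disjoint ``source'' balls $B(x_n,r_n)\subset L$ with $\sum_n r_n^d\ge(1-\veps)\,r(L)^d$, and the corresponding contracted balls $B(x_n,\phi^{-1}(r_n^d))$ (contained in the source balls once the radii are small enough) are the children of $L$. Discarding finitely many indices at every stage forces the radii to $0$ and the indices actually used to $+\infty$, which guarantees that each point of $K=\bigcap_k\bigcup(\text{generation }k)$ lies in infinitely many contracted balls, i.e. in $E^\phi$; the measure $\mu$ is then built by cascading mass down this tree.

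The hard part --- the heart of \cite{BV06} --- is the construction of $\mu$, that is, arranging that the Frostman-type bound $\mu(A)\le c\,\phi(r(A))$ holds \emph{uniformly over all balls $A$}, not merely over the balls that occur as generations of the construction. A naive, Lebesgue-proportional distribution of the mass does not survive the iteration, because each contracted ball is much smaller than its source ball while it must carry the mass appropriate to its $\phi$-size; one must therefore interpose many auxiliary levels at intermediate scales between consecutive genuine generations and weight the mass with care. Controlling the resulting estimates comes down to comparing $\phi$ at different scales, and this is precisely where the hypothesis that $s\mapsto s^{-d}\phi(s)$ be monotonic is used. Granting this local construction (the ``KGB lemma'' of \cite{BV06}), the mass distribution principle closes the argument and delivers $\mathcal{H}^\phi(E^\phi)=+\infty$.
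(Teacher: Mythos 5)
The paper does not actually prove this statement: Theorem~\ref{THMUBI2} is quoted verbatim as the Beresnevich--Velani mass transference principle from \cite{BV06} and is used as a black box, so there is no ``paper's own proof'' to compare against. What you have written is therefore being judged as a stand-alone argument.

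Your outline of the BV06 architecture is accurate as far as it goes: the reduction from $\mathcal H^\phi(E^\phi)=+\infty$ to a local lower bound $\mathcal H^\phi(E^\phi\cap B)\ge c\,\phi(r(B))$, the appeal to the mass distribution principle, the Cantor-type construction whose children are the shrunk balls $B(x_n,\phi^{-1}(r_n^d))$ selected by a Vitali covering of the full-measure set $E$ inside each cell, and the role of the monotonicity of $s^{-d}\phi(s)$ in comparing scales are all correctly identified. The preliminary observations ($E^\phi\subset E$ up to finitely many terms, and $\mathcal H^\phi$ of a cube being infinite) are also right. Summing the local bound over $\asymp\delta^{-d}$ disjoint balls and letting $\delta\to0$ is a legitimate way to pass to $+\infty$, using that $\mathcal H^\phi$ is a Borel measure.

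However, the proposal is an outline, not a proof, and you say so yourself: the construction of the measure $\mu$ with the uniform Frostman bound $\mu(A)\le c\,\phi(r(A))$ over \emph{all} balls $A$ --- what you call the ``KGB lemma'', i.e.\ the $K_{G,B}$-lemma of \cite{BV06} --- is exactly where all the work lies (the interposed intermediate generations, the delicate bookkeeping of how much mass each cell may carry at each scale, and the case analysis depending on whether a test ball $A$ is commensurate with one generation or straddles two), and you explicitly grant it without supplying it. As the paper itself also relies on \cite{BV06} for this theorem, your sketch is a reasonable gloss, but it does not constitute an independent proof; if you intend it to stand alone, the $K_{G,B}$-lemma must be carried out, not merely named.
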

% When $\phi(s)=s^{d/\alpha}$, we get Theorem \ref{THMCT3} with a slight improvement (we can choose
% $\psi(s)=s^{d/\alpha}\big(\ln 1/s\big)^{\veps}$ for every $\veps>0$ for instance). We postpone the proof
% of Theorem \ref{THMUBI2} to Section \ref{SECUBI}.

We shall use it in the following situation.
\begin{corollary}\label{CORUBI2}
 Let $(q_n)$ be a sequence of integers and, for each $n\in\mathbb N$, each $k\leq q_n$, let
 $B_{k,n}=B(x_{k,n},r_n)$ be a ball with center $x_{k,n}\in[0,1]^d$ and with radius $r_{k,n}$
 such that $\lim_{n\to+\infty}\max_k(r_{k,n})=0$. Let also 
 $\phi:\mathbb R_+\to\mathbb R_+$ 
be a dimension function satisfying $\phi(s)\gg s^d$ when $s$
goes to 0 and $s^{-d}\phi$ is monotonic. Define
$$\begin{array}{rclcrcl}
 B_n&=&\bigcup_{k=1}^{q_n}B_{k,n}&\quad&E&=&\limsup_n B_n\\
 B_n^\phi&=&\bigcup_{k=1}^{q_n}B(x_{k,n},\phi^{-1}(r_{k,n}^d))&\quad&E^\phi&=&\limsup_n B_n^\phi.
\end{array}
$$
Suppose that almost every point of $[0,1]^d$ (in the sense of the
Lebesgue's measure) lies in $E$. Then,
$\mathcal{H}^\phi(E^\phi)=+\infty$.
\end{corollary}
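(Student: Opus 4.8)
The plan is to obtain Corollary~\ref{CORUBI2} as a direct consequence of Theorem~\ref{THMUBI2}, the only work being to merge the finite families of balls produced at the successive levels into one admissible sequence. First I would note that, since each $q_n$ is finite and $\max_{k\le q_n}r_{k,n}\to 0$ as $n\to+\infty$, for every $\veps>0$ only finitely many of the balls $B_{k,n}$ have radius $\ge\veps$: such a ball lives on one of the finitely many levels $n$ with $\max_k r_{k,n}\ge\veps$, and each of those levels carries at most $q_n<+\infty$ balls. Hence the countable family $\{B_{k,n}:n\in\mathbb N,\ 1\le k\le q_n\}$ can be reindexed as a single sequence $\big(B(\tilde x_m,\tilde r_m)\big)_{m\ge 0}$ with radii $\tilde r_m$ decreasing to $0$, which is exactly the data required by Theorem~\ref{THMUBI2}; the hypotheses on $\phi$ and the condition $\tilde x_m\in[0,1]^d$ are inherited from the corollary.

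Next I would check that this reindexing leaves the two $\limsup$ sets unchanged. Set $\widetilde E=\limsup_m B(\tilde x_m,\tilde r_m)$. If $x\in B_n$ for infinitely many $n$, then $x$ lies in infinitely many of the $B_{k,n}$, hence in $B(\tilde x_m,\tilde r_m)$ for infinitely many $m$, so $E\subset\widetilde E$. Conversely, if $x\in B(\tilde x_m,\tilde r_m)$ for infinitely many $m$, then $x$ belongs to infinitely many balls $B_{k,n}$; as each fixed level $n$ contributes only $q_n<+\infty$ of them, infinitely many distinct levels must occur, whence $x\in B_n$ for infinitely many $n$. Thus $\widetilde E=E$. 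Moreover the contracted ball attached to $B(\tilde x_m,\tilde r_m)=B(x_{k,n},r_{k,n})$ is exactly $B(\tilde x_m,\phi^{-1}(\tilde r_m^d))=B(x_{k,n},\phi^{-1}(r_{k,n}^d))$, so the reindexing of $\{B(x_{k,n},\phi^{-1}(r_{k,n}^d))\}$ is $\{B(\tilde x_m,\phi^{-1}(\tilde r_m^d))\}$; the very same finiteness argument (now just using that $q_n<+\infty$) gives $\limsup_m B(\tilde x_m,\phi^{-1}(\tilde r_m^d))=\limsup_n B_n^\phi=E^\phi$.

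It then remains to invoke Theorem~\ref{THMUBI2}: the hypothesis that almost every point of $[0,1]^d$ lies in $E=\widetilde E$ is assumed, so the theorem applies to the sequence $\big(B(\tilde x_m,\tilde r_m)\big)_m$ and yields $\mathcal H^\phi(E^\phi)=\mathcal H^\phi\big(\limsup_m B(\tilde x_m,\phi^{-1}(\tilde r_m^d))\big)=+\infty$.

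I do not anticipate any real difficulty: the corollary is essentially a restatement of Theorem~\ref{THMUBI2} adapted to families of balls indexed by a double sequence, and the only step deserving attention is the bookkeeping of the previous two paragraphs — using the finiteness of each $q_n$ together with $\max_k r_{k,n}\to 0$ both to perform the reordering into a sequence with radii decreasing to $0$ and to transport the $\limsup$ sets (and their $\phi$-contractions) back and forth.
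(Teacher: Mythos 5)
Your argument is correct and follows essentially the same route as the paper's: reorder the doubly-indexed family of balls into a single sequence to which Theorem~\ref{THMUBI2} applies, check that the reordering preserves the two $\limsup$ sets, and invoke the theorem. You merely spell out the details the paper leaves implicit (why the radii can be arranged to decrease to $0$, and why finiteness of each $q_n$ makes the $\limsup$ sets coincide), which is a fair reading of the same one-line argument.
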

\begin{proof}
 Reordering the sequences $(B_{k,n})$ and $(B_{k,n}^\phi)$ as $(C_j)$ and $(C^\phi_j)$, we can observe that
 \begin{eqnarray*}
\limsup_n B_n&=&\limsup_j C_j=E\\
 \limsup_n B_n^\phi&=&\limsup_j C_j^\phi=E^\phi.
  \end{eqnarray*}
  
 Thus the corollary follows from a direct application of Theorem \ref{THMUBI2}.
\end{proof}

\section{Multifractal analysis of the divergence of the Fourier series of functions of $L^p(\mathbb T)$}\label{SECLP}
In this section, we shall prove Theorem \ref{THMLP}. Our method, which is inspired by \cite{Jaf00a}, is divided into two parts.
During the first one, we will construct a single function, which we call the saturating function, satisfying the conclusions of
Theorem \ref{THMLP}. During the second one, we will show how to derive a residual set from this single function.

\subsection{The saturating function}\label{SECSATURATINGLP}
Our intention is to construct a function $g$ such that $|S_ng(x)|$ is big when $x$ is close to a dyadic number. The following
definition gives a precise meaning.
\begin{definition}
A real number $x$ is $\alpha$-approximable by dyadics, $\alpha\geq 1$, if there exist two sequence of integers $(k_n)$, $(j_n)$ 
such that
$$\left|x-\frac{k_n}{2^{j_n}}\right|\leq\frac{1}{2^{\alpha j_n}}$$
and $(j_n)$ goes to infinity. The dyadic exponent of $x$ is the supremum of the set of real numbers $\alpha$ such that $x$ is
$\alpha$-approximable by dyadics.
\end{definition}
We denote by
$$D_\alpha=\left\{x\in[0,1];\ \textrm{$x$ is $\alpha$-approximable by dyadics}\right\}.$$
% It follows from Theorem \ref{THMUBIQUITY} that
% $\dim_\mathcal{H}(D_\alpha)\geq1/\alpha$ (in fact
% $\dim_\mathcal{H}(D_\alpha)=1/\alpha$) and more precisely that $\mathcal{H}^\phi(D_\alpha)>0$
% with $\phi(s)=s^{1/\alpha}(\log 1/s)^2$.
It is easy to check that ${\mathcal H}^{\beta}(D_\alpha)=0$ for $\beta>1/\alpha$ so that $\dim_{\mathcal H}(D_\alpha)\le 1/\alpha$. On the other hand, it is well-known that $\dim_{\mathcal H}(D_\alpha) \geq\frac1\alpha$. 
%For
%example, in \cite{DMPV95} or in \cite{Jaf00b}, we can find a weaker version of
%Theorem \ref{THMUBI2} (known as uibiquity theorem)which allow us to prove that
%${\mathcal H}^\phi(D_\alpha)>0$ with $\phi(s)=s^{1/\alpha}(\log(1/s))^2$ and
%is sufficient to get $\dim_{\mathcal H}(D_\alpha)\ge 1/\alpha$. 
Let us nevertheless show how this
follows from Corollary \ref{CORUBI2}. Indeed, $D_\alpha$ can be described as a limsup set:
$$D_\alpha=\limsup_{j\to+\infty}\bigcup_{k=0}^{2^j-1}I_{k,j}^\alpha$$
where the $I_{k,j}$ are the dyadic intervals
$$I_{k,j}=\left[\frac{k}{2^j}-\frac1{2^{j}},\frac{k}{2^j}+\frac1{2^{j}}\right]$$
and 
$$I_{k,j}^\alpha=\left[\frac{k}{2^{j}}-\frac1{2^{\alpha j}},\frac{k}{2^j}+\frac1{2^{\alpha j}}\right].$$
Since $\bigcup_{k=0}^{2^j-1}I_{k,j}\supset [0,1]$, Corollary \ref{CORUBI2} implies that 
$\H^{1/\alpha}(D_\alpha)=+\infty$. 

\medskip

We are going to define $g\in L^p(\TT)$ such that the divergence index of $g$ at $x$ depends on the dyadic exponent of $x$. The greater the dyadic exponent will be, the greater the divergence index of $g$ at $x$ will be. To do this, we will classify the dyadic intervals following their center. Namely, each $k/2^j$ can be uniquely written $K/2^J$
with $K\notin 2\mathbb Z$ and $1\leq J\leq j$ (such a center comes into play from the $J$-th generation). Let 
$\mathcal I_J=\{K/2^J;\ K\notin 2\mathbb Z,\ 0\leq K\leq 2^{J}-1\}$ and 
$$\mathbf I_{J,j}=\bigcup_{\frac{k}{2^j}\in\mathcal I_J}I_{k,j}\quad\quad \mathbf I'_{J,j}=\bigcup_{\frac{k}{2^j}\in\mathcal I_J}2I_{k,j}.$$
Here and elsewhere, when $I$ is an interval and $c$ is a positive real number, $c I$ means the interval with the same
center as $I$
and with length $c |I|$. Observe that, when  $1\le J<j$, the intervals
$2I_{k,j}$, $\frac{k}{2^j}\in\mathcal I_J$ don't overlap and the set $\mathbf I'_{J,j}$ has measure
$2^{J-1}2^{2-j}$. Observe also that when $J$ is small with respect to
$j$, the real numbers $x$ in $\mathbf I_{J,j}$ 
are well-approximated by dyadics $K/2^J$, since $|x-K/2^J|\leq 1/2^j$.
 
We first define a trigonometric polynomial with $L^p$-norm 1 which is almost constant on each
$\mathcal I_{J,j}$ and which is big on $\mathcal I_{J,j}$ when $J$ is small. 
\begin{lemma}\label{LEMBASICLP}
Let $j\geq 1$. There exists a trigonometric polynomial $g_j\in L^p(\TT)$ with spectrum contained in
$[0,j2^{j+1})$ such that
\begin{itemize}
 \item $\|g_j\|_p\leq 1$;
 \item For any $1\leq J\leq j$ and any $x\in \mathbf I_{J,j}$, we can find two
   integers $n_1$ and $n_2$ satisfying 
   $0\le n_1<n_2< j2^{j+1}$ and such that
 $$|S_{n_2}g_j(x)-S_{n_1}g_j(x)|\geq\frac{1}{4j}2^{-(J-j+1)/p}.$$
\end{itemize}
\end{lemma}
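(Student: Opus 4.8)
\emph{Strategy.} I would take for $g_j$ a superposition over the scales $J=1,\dots,j$ of elementary polynomials living in \emph{pairwise disjoint frequency bands}, the $J$-th of which has modulus $\gtrsim 2^{-(J-j+1)/p}$ precisely on $\mathbf I_{J,j}$. The factor $1/j$ in the conclusion is exactly the loss in a triangle inequality over the $j$ bands, and the frequency budget $j2^{j+1}$ is just enough to hold $j$ bands of length $\sim 2^{j}$.

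\emph{The building blocks.} Fix $N=2^{j-1}$, the largest $N$ whose Fej\'er kernel has main lobe wide enough to cover a dyadic interval: from $F_N(y)=\sin^2(N\pi y)/(N\sin^2(\pi y))$ one gets $F_N(y)\ge 4N/\pi^2$ whenever $|y|\le 1/(2N)=2^{-j}$, and $[k/2^j-2^{-j},\,k/2^j+2^{-j}]$ is exactly $I_{k,j}$. For $1\le J\le j-1$ let $\Phi_J=\sum_{k/2^j\in\mathcal I_J}F_N(\cdot-k/2^j)$, a nonnegative trigonometric polynomial of degree $\le N$. The centers $k/2^j\in\mathcal I_J$ are $2^{1-J}$-separated, so the main lobes are pairwise disjoint and their union is $\mathbf I_{J,j}$; hence $\Phi_J\ge 4N/\pi^2$ on $\mathbf I_{J,j}$. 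On the other hand, using $F_N(y)\le 1/(4Ny^2)$ and the separation of the centers to bound $\Phi_J$, on the Voronoi cell of each center, by a single translate of $F_N$ plus a tail of size $O(2^{2J}/N)$, and recalling $\|F_N\|_p^p\asymp N^{p-1}$, one obtains $\|\Phi_J\|_p\le C_p\,2^{J/p}N^{1-1/p}$. Translating the spectrum by an integer $M_J\ge N+1$ and normalising, set $P_J=\|\Phi_J\|_p^{-1}\,e_{M_J}\,\Phi_J$: then $\|P_J\|_p=1$, the spectrum of $P_J$ is contained in $B_J:=[M_J-N,\,M_J+N]$, and for $x\in\mathbf I_{J,j}$
$$|P_J(x)|=\frac{\Phi_J(x)}{\|\Phi_J\|_p}\ \ge\ \frac{4N/\pi^2}{C_p\,2^{J/p}N^{1-1/p}}\ =\ \frac{4}{\pi^2C_p}\,(N/2^J)^{1/p}\ =\ \frac{4}{\pi^2C_p}\,2^{-(J-j+1)/p}.$$
For the last scale $J=j$ take instead $P_j:=e_{M_j}$, a single exponential: then $\|P_j\|_p=1$, $|P_j|\equiv 1$, and since the $I_{k,j}$ with $k$ odd tile $\TT$ one has $\mathbf I_{j,j}=\TT$, while the required threshold $2^{-1/p}$ is $\le1$.

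\emph{Assembling $g_j$ and reading off the block.} Spacing the $M_J$ by more than $2N$ makes the bands $B_J$ pairwise disjoint, and since $j$ bands of length $\le 2^j+1$ fit inside $[0,j2^{j+1})$ we may put them all there. Set $g_j=\frac1j\sum_{J=1}^jP_J$: then $g_j$ has spectrum in $\bigcup_J B_J\subset[0,j2^{j+1})$ and $\|g_j\|_p\le\frac1j\sum_{J=1}^j\|P_J\|_p=1$. Given $J$ and $x\in\mathbf I_{J,j}$, put $n_1=M_J-N-1$ and $n_2=M_J+N$, so $0\le n_1<n_2<j2^{j+1}$; as $g_j$ has only nonnegative frequencies and the bands are disjoint, $S_{n_2}g_j-S_{n_1}g_j$ retains exactly the frequencies of $g_j$ lying in $B_J$, i.e. $\frac1jP_J$, whence
$$|S_{n_2}g_j(x)-S_{n_1}g_j(x)|=\tfrac1j|P_J(x)|\ \ge\ \frac{c_p}{j}\,2^{-(J-j+1)/p},$$
with $c_p=4/(\pi^2C_p)$ for $J<j$ and $c_p=1$ for $J=j$ — the claim up to the value of the constant.

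\emph{The main obstacle.} Everything above, apart from one estimate, is routine bookkeeping with disjoint frequency bands. The genuine work is the bound $\|\Phi_J\|_p\le C_p\,2^{J/p}N^{1-1/p}$ with $C_p$ small enough ($C_p\le 16/\pi^2$) to turn $c_p$ into the announced $\tfrac14$: one must isolate, on each Voronoi cell, the dominant translate of $F_N$ and check that the leftover superposition of the other translates, of size $O(2^{2J}/N)$, is — because $2^J\le N$ — negligible in $L^p$-norm against the main term of order $2^J N^{p-1}$. I expect this to be the only point requiring real care.
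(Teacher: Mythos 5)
Your construction shares the paper's high-level architecture --- $g_j=\frac1j\sum_{J}(\text{block in a frequency band }B_J)$, the $J$-th block having modulus $\gtrsim 2^{-(J-j+1)/p}$ on $\mathbf I_{J,j}$, and $S_{n_2}g_j-S_{n_1}g_j$ isolating exactly one block --- but the building blocks themselves are genuinely different. The paper starts from a \emph{spatial} bump $\chi_{J,j}$ (piecewise linear, $\equiv 1$ on $\mathbf I_{J,j}$, supported on $\mathbf I'_{J,j}$, with $\|\chi_{J,j}'\|_\infty\le 2^j$) and sets $g_{J,j}=e_{(2J-1)2^j}\sigma_{2^j}\chi_{J,j}$: the $L^p$ bound is then immediate, $\|g_{J,j}\|_p\le\|\chi_{J,j}\|_p\le 2^{(J-j+1)/p}$ because $\sigma_n$ is an $L^p$-contraction and $0\le\chi_{J,j}\le 1$ on a set of measure $2^{J-j+1}$, while the pointwise lower bound $|\sigma_{2^j}\chi_{J,j}|\ge\frac14$ on $\mathbf I_{J,j}$ drops out of Lemma~\ref{LEMFEJER}(1) applied to $1-\chi_{J,j}$. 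You instead take $\Phi_J=\sum_{K}F_N(\cdot-K/2^J)$ with $N=2^{j-1}$ and normalise by $\|\Phi_J\|_p$, getting the pointwise lower bound from the explicit inequality $F_N(y)\ge 4N/\pi^2$ on the main lobe. The frequency bookkeeping, the $J=j$ case via a single exponential, and the step where $S_{n_2}-S_{n_1}$ reads off $\frac1jP_J$ are all correct.

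The one real open point is the one you flag yourself: you need $\|\Phi_J\|_p\le C_p\,2^{J/p}N^{1-1/p}$ with $C_p\le 16/\pi^2\approx 1.62$ to recover the announced constant $\frac14$. This is not routine. A Voronoi-cell estimate using $F_N\le\min(N,(4Ny^2)^{-1})$ gives, for the main term, $\|F_N\|_p^p\le\frac{2p}{2p-1}N^{p-1}$, hence a main contribution $\le 2^{-1/p}\bigl(\tfrac{2p}{2p-1}\bigr)^{1/p}2^{J/p}N^{1-1/p}\le 2^{J/p}N^{1-1/p}$; but the tail from the other translates, which you describe as $O(2^{2J}/N)$ and ``negligible because $2^J\le N$'', is in fact of the \emph{same} order as the main term when $J=j-1$, where $2^J=N$ and $(2^J/N)^{2-1/p}=1$. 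A careful version of this argument bounds the tail's $L^\infty$ norm by $\tfrac{\pi^2}{16}\cdot\tfrac{2^{2J}}{N}$ and yields $C_p\le 2^{-1/p}\bigl(\tfrac{2p}{2p-1}\bigr)^{1/p}+\tfrac{\pi^2}{16}$, which stays below $1+\tfrac{\pi^2}{16}\approx 1.617$ and thus clears the bar --- but only just, and none of this is in your sketch. The paper's route sidesteps the whole issue: Lemma~\ref{LEMFEJER} delivers $1/4$ in one stroke and the $L^p$ bound is a single line. If you are content with the lemma holding for \emph{some} absolute constant $c/j$ in place of $1/(4j)$ (which is in fact all that Section~\ref{SECSATURATINGLP} uses, since the constant is absorbed into a generic $C$ there), your outline is essentially complete; as a proof of the lemma as stated, the $L^p$ estimate with its constant must still be carried out.
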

\begin{proof}
We set for any $1\leq J\leq j$:
\begin{itemize}
\item $\chi_{J,j}$ a continuous piecewise linear function equal to 1 on $\mathbf I_{J,j}$, equal to 0 outside $\mathbf I'_{J,j}$, and satisfying
$0\leq \chi_{J,j}\leq 1$ and $\|\chi'_{J,j}\|_\infty\le 2^{j}$;
\item $c_{J,j}=\frac{1}{j}2^{-(J-j+1)/p}$ ($c_{J,j}$ is big when $J$ is small);
\item $g_{J,j}=e_{(2J-1)2^{j}}\sigma_{2^j}\chi_{J,j}$.
\end{itemize}
It is straighforward to observe that the spectrum of $g_{J,j}$ is contained in $[n_{J,j},m_{J,j}]$ with
$$\left\{\begin{array}{rcl}
n_{J,j}&=&(2J-1)2^{j}-(2^j-1)\\
m_{J,j}&=&(2J-1)2^{j}+(2^j-1).
\end{array}\right.$$
Thus, the spectra of the $g_{J,j}$, $1\leq J\leq j$ are
disjoint. Moreover, $\Vert g_{j,j}\Vert_p=1$ and for $1\le J<j$,  $\Vert
g_{J,j}\Vert_p\le\Vert\chi_{J,j}\Vert_p\le 2^{(J-j+1)/p}$.

We finally set
$$g_j=\sum_{J=1}^j c_{J,j}g_{J,j}$$
and we claim that $g_j$ is the trigonometric polynomial we are looking
for. First of all, the spectrum of $g_j$ is included in
$[n_{1,j},m_{j,j}]$ which is contained in $[0,j2^{j+1})$. Moreover, the $L^p$
norm of $g_j$ is
$$\|g_j\|_p\leq\sum_{J=1}^j \frac{1}{j}2^{-(J-j+1)/p}\|g_{J,j}\|_p\leq  1.$$
Pick now any $x\in \mathbf I_{J,j}$, $1\leq J\leq j$ so that
\begin{eqnarray*}
|S_{m_{J,j}}g_j(x)-S_{n_{J,j}-1}g_j(x)|&=&|c_{J,j}g_{J,j}(x)|\\
&=&\frac{1}j 2^{-(J-j+1)/p} |\sigma_{2^j}\chi_{J,j}(x)|.
\end{eqnarray*}
Observing that $\chi_{J,j}(x)=1$ and applying the first point of Lemma
\ref{LEMFEJER} to $1-\chi_{J,j}$, we find
$$\left|\sigma_{2^j}\chi_{J,j}(x)\right|\geq1-\left|\sigma_{2^j}(1-\chi_{J,j}(x))\right|\geq \frac14.$$
Thus,
$$|S_{m_{J,j}}g_j(x)-S_{n_{J,j}-1}g_j(x)|\geq \frac{1}{4j}2^{-(J-j+1)/p}$$
and the conclusion follows with $n_2=m_{J,j}$ and $n_1=n_{J,j}-1$.

\end{proof}

We are now ready to construct the saturating function. It is defined by
$$g=\sum_{j\geq 1}\frac1{j^2}e_{j2^{j+1}}g_j.$$
Observe in particular that the functions 
$e_{j2^{j+1}}g_j$ have disjoint spectra (the spectrum of $e_{j2^{j+1}}g_j$ is 
contained in $[j2^{j+1};j2^{j+2})$ ) and that $g$ belongs to $L^p(\TT)$.

We then show that for any $x\in D_\alpha$, $\alpha>1$,
$$\displaystyle\limsup_{n\to+\infty}\frac{\log |S_ng(x)|}{\log
  n}\geq\frac1p\left(1-\frac1\alpha\right).$$ 
Indeed, let $x\in D_\alpha$
and let $\veps>0$ with $\alpha-\veps>1$. We can find integers $K$ and $J$ with
$J$ as large as we want and $K\notin 2\mathbb Z$ such that
$$\left|x-\frac{K}{2^J}\right|\leq\frac1{2^{(\alpha-\veps/2)J}}.$$
We set $j=[(\alpha-\veps/2)J]$ the integer part of  $(\alpha-\veps/2)J$  and $k$ such that $k/2^j=K/2^J$. Hence, 
$$\left|x-\frac{k}{2^j}\right|\leq\frac{1}{2^{(\alpha-\veps/2)J}}\leq \frac{1}{2^{j}}.$$

Using Lemma \ref{LEMBASICLP}, we can find two integers $n_1$ and $n_2$
satisfying  $j2^{j+1}\le n_1< n_2< j2^{j+2}$ and such that
\begin{eqnarray*}
 |S_{n_2}g(x)-S_{n_1}g(x)|&=&\frac{1}{j^2}|S_{n_2}(e_{j2^{j+1}}g_j)(x)-S_{n_1}(e_{j2^{j+1}}g_j)(x)|\\
 &\geq&\frac{1}{4j^3}2^{-(J-j+1)/p}\\
&\geq&\frac{1}{4j^3}2^{\frac1p\left(j-\frac{j+1}{\alpha-\veps/2}-1\right)}\\
&\geq&C2^{\frac1p\left(1-\frac{1}{\alpha-\veps}\right)j}.
\end{eqnarray*}
It follows that we can find $n\in\{ n_1,n_2\}$ such that  $|S_{n}g(x)|\ge
\frac{C}2 2^{\frac1p\left(1-\frac{1}{\alpha-\veps}\right)j}$. Combining the estimates on $n$ and on $|S_ng(x)|$, and since $J$ (hence $j$, hence $n$) can be taken as large as we want, we get that
$$\limsup_{n\to+\infty}\frac{\log |S_ng(x)|}{\log n}\geq\frac1p\left(1-\frac{1}{\alpha-\veps}\right).$$
Since $\veps>0$ is arbitrary, we obtain in fact that
$$\textrm{for any }x\in D_\alpha,\quad \limsup_{n\to+\infty}\frac{\log |S_ng(x)|}{\log n}\geq\frac1p\left(1-\frac1\alpha\right).$$
At this point, it would be nice to get a lower bound for
$\displaystyle\limsup_{n\to +\infty} \frac{\log
  |S_ng(x)|}{\log n}$ for any $x$ with dyadic exponent equal to
$\alpha$. Unfortunately, this does not seem
easy and we will rather conclude by using an argument lying on Hausdorff measures. Indeed, define
\begin{eqnarray*}
D_\alpha^1&=&\left\{x\in D^\alpha;\ \limsup_{n\to +\infty} \frac{\log |S_ng(x)|}{\log n}=\frac1p\left(1-\frac{1}{\alpha}\right)\right\}\\
D_\alpha^2&=&\left\{x\in D^\alpha;\ \limsup_{n\to +\infty}\frac{\log |S_ng(x)|}{\log n}>\frac1p\left(1-\frac{1}{\alpha}\right)\right\}.
\end{eqnarray*}
We have already observed that $\mathcal{H}^{1/\alpha}(D_\alpha^1\cup
D_\alpha^2)=\mathcal{H}^{1/\alpha}(D_\alpha)=+\infty$. It suffices 
to prove that $\mathcal{H}^{1/\alpha}(D_\alpha^2)=0$. 
Let $(\beta_n)$ be a sequence of real numbers such that $\displaystyle\beta_n
>\frac1p\left(1-\frac1\alpha\right)$ and $\displaystyle\lim_{n\to
  +\infty}\beta_n=\frac1p\left(1-\frac1\alpha\right)$. 

Let us observe that
$$D_\alpha^2\subset \bigcup_{n\ge 0}\mathcal E(\beta_n,g).$$
Moreover, Theorem \ref{THMAUBRY} implies that  $\mathcal{H}^{1/\alpha}(\mathcal E(\beta_n,g))=0$ for
all $n$. Hence, $\mathcal{H}^{1/\alpha}(D_\alpha^2)=0$ and $\mathcal{H}^{1/\alpha}(D_\alpha^1)=+\infty$, which proves
that
$$\dim_\mathcal{H}\left(E\left(\frac1p\left(1-\frac{1}{\alpha}\right),g\right)\right)
\ge\frac1\alpha.$$
By Theorem \ref{THMAUBRY} again, this inequality is necessarily an equality. 
Finally, $g$ satisfies the conclusions of Theorem \ref{THMLP}, setting $1-\beta p=1/\alpha$.

\begin{remark}
If $\alpha=1$, then $\beta=0$ and the conclusion is a
consequence of Carleson's Theorem.
\end{remark}

\subsection{The residual set}
To build the dense $G_\delta$-set, the idea is that any function whose Fourier
coefficients are sufficiently
close to those of the saturating function $g$ on infinitely many intervals $[j2^{j+1};j2^{j+2})$ will 
satisfy the conclusions of Theorem \ref{THMLP}. Precisely,
let $(f_j)_{j\ge 1}$ be a dense sequence of polynomials in $L^p(\TT)$ with
spectrum contained in $[-j,j]$.
 We define a sequence $(h_j)_{j\ge 1}$ as follows:
$$h_j=f_j+\frac{1}{j}e_{j2^{j+1}}g_j$$
so that $\|h_j-f_j\|_p$ goes to 0 and $(h_j)_{j\ge 1}$ remains dense in
$L^p(\TT)$. Observe also that the spectra of $f_j$ and $h_j-f_j$ don't
overlap. Finally, let $(r_j)_{j\ge 1}$ be a
sequence of positive integers so small that, for any $f\in L^p(\TT)$
with $\|f\|_{L^p}\leq r_j$, $\|S_n f\|_\infty\leq 1$ for any $n\le j2^{j+2}$. 
The dense $G_\delta$ set we will consider is
$$A=\bigcap_{l\in\mathbb N}\bigcup_{j\geq l}B(h_j,r_j).$$
Let $f$ belong to $A$ and let $(j_l)_{l\ge 1}$ be an increasing sequence of integers such that $f$ belongs to $B(h_{j_l},r_{j_l})$ for any $l$. Then,
for any $\alpha> 1$, we define $J_l=[j_l/\alpha]+1$ (which is smaller than
$j_l$ if $l$ is large enough) and 
$$E=\limsup_{l \to+\infty} \mathbf I_{J_l,j_l}.$$
For any $x\in E$ one can find $j=j_l$ as large as we want, the
corresponding $J=J_l$ and $1\leq k\leq 2^j-1$ such that $x$ belongs to $I_{k,j}$
with $k/2^j\in \mathcal I_{J}$. 

Observe that $f=f_j+\frac1je_{j2^{j+1}}g_j+(f-h_j)$. 
By Lemma \ref{LEMBASICLP}, we can find two
integer $n_1$ and $n_2$ satisfying $j2^{j+1}\le n_1< n_2< j2^{j+2}$ and such
that
$$|S_{n_2}(e_{j2^{j+1}}g_j)(x)-S_{n_1}(e_{j2^{j+1}}g_j)(x)|\ge
\frac{1}{4j}2^{-(J-j+1)/p}.$$
Using the definition of the $r_j$, we obtain
\begin{eqnarray*}
 |S_{n_2}f(x)-S_{n_1}f(x) |&\geq&\frac1{4j^2}2^{-(J-j+1)/p}-|S_{n_2}(f-h_j)(x)|-|S_{n_1}(f-h_j)(x)|\\
 &\geq&\frac{1}{4j^2}2^{-(J-j+1)/p}-2
\end{eqnarray*}
so that
$$|S_{n_2}f(x)|\ge\frac{C}{j^2}2^{-(J-j+1)/p}\quad\text{or}\quad |S_{n_1}f(x)|\ge\frac{C}{j^2}2^{-(J-j+1)/p}.$$
Observing that
$$
\left\{\begin{array}{rcl}
\max(\log n_2,\log n_1)&=&j\log 2+O(\log j)\\
\null\\
\log\left(j^{-2}2^{-(J-j+1)/p}\right)&=&\frac1p
\left(1-\frac1\alpha\right)j\log2+O(\log j)
\end{array}\right.$$
we find in particular that, for any $x\in E$,
$$\limsup_{n\to +\infty} \frac{\log |S_nf(x)|}{\log n}\geq 
\frac1p\left(1-\frac 1\alpha\right).$$
On the other hand, let us write
$$\mathbf I_{J_l,j_l}=\bigcup_{1\le K<2^{J_l},\ K\notin 2{\mathbb Z}}\left[\frac{K}{2^{J_l}}-\frac1{2^{j_l}},\frac{K}{2^{J_l}}+\frac{1}{2^{j_l}}\right]$$
and remark that for any $l$, since $J_l\geq j_l/\alpha$,
$$\bigcup_{1\le K<2^{J_l},\ K\notin 2{\mathbb Z}}\left[\frac{K}{2^{J_l}}-\frac1{2^{j_l/\alpha}},\frac{K}{2^{J_l}}+\frac{1}{2^{j_l/\alpha}}\right]\supset [0,1].$$

Hence, we can apply Corollary \ref{CORUBI2} to get $\mathcal{H}^{1/\alpha}(E)=+\infty$. We now conclude exactly as in Section \ref{SECSATURATINGLP}
to get $\mathcal{H}^{1/\alpha}(E^1)=+\infty$, with
$$E^1=\left\{x\in E;\ \limsup_{n\to +\infty} \frac{\log |S_nf(x)|}{\log n}=\frac1p\left(1-\frac1\alpha\right)\right\}.$$
Finally, 
$$\dim_\mathcal{H}\left(E\left(\frac1p\left(1-\frac{1}{\alpha}\right),f\right)\right)
\ge\frac1\alpha$$
and $f$ satisfies the conclusions of Theorem \ref{THMLP}, setting $1-\beta p=1/\alpha$.
\begin{remark}During the construction , we didn't use that the spectra
  of the functions $e_{j2^{j+1}}g_j$ are disjoint, 
because we considered each one separately. We could also define $h_j$ by $h_j=f_j+\frac 1je_{j+1}g_j$. 
\end{remark}
\begin{remark}
The above construction can be carried on $L^1(\TT)$. Namely, for quasi-all
$f\in L^1(\TT)$, we obtain for any $\beta\in [0,1]$, 
$$\dim_{\mathcal H}\left( E\left(\beta,f\right)\right)\geq 1-\beta .$$
However, we cannot go further because Carleson's Theorem dramatically breaks
down in $L^1(\TT)$ and we do not have Theorem \ref{THMAUBRY} at our disposal in this context. 
The study of what happens exactly on $L^1(\TT)$ is a very exciting open question.
\end{remark}

\section{Multifractal analysis of the divergence of the Fourier series of functions of $\mathcal C(\mathbb T)$}\label{SECCT}
We turn to the proof of Theorem \ref{THMCT3}. We follow a strategy close to that of Section \ref{SECLP}. First of all, 
we will give un upper bound for 
the precised Hausdorff dimension of the sets $\mathcal F(\beta,f)$ (hence, of the sets $F(\beta,f)$) for
any $f\in\mathcal C(\TT)$ and any $\beta\in(0,1)$. Second, we will build polynomials with small $L^\infty$-norms
and such that their Fourier series have big partial sums on big intervals. 
These polynomials will be the blocks of our final construction. Working on $\mathcal C(\TT)$ adds several difficulties which will be explained
when we will encounter them.

\subsection{The sets $\mathcal F(\beta,f)$ cannot be too big}
We shall prove the following lemma (recall that $\phi_{s,t}(x)=x^s\exp\left((\log 1/x)^{1-t}\right)$).
\begin{lemma}\label{LEMCTMAJO}
Let $\beta\in(0,1)$ and $f\in\mathcal C(\TT)$. Then, for any $\gamma>1-\beta$, 
$$\mathcal H^{\phi_{1,\gamma}}\big(\mathcal F(\beta,f)\big)=0.$$
In particular, the precised Hausdorff dimension of $\mathcal F(\beta,f)$ cannot exceed $(1,1-\beta)$.
\end{lemma}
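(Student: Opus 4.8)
The plan is to reproduce, on the logarithmic scale and for the finer gauge $\phi_{1,\gamma}$, the scheme behind the upper bound in Aubry's Theorem~\ref{THMAUBRY}. First I would reduce to a single $\limsup$ set. Since $\mathcal F(\beta,f)=\bigcup_{m\ge 1}\mathcal F_m$ with $\mathcal F_m=\limsup_n\{x\in\TT:\ |S_nf(x)|>\frac1m(\log n)^\beta\}$, subadditivity of $\mathcal H^{\phi_{1,\gamma}}$ reduces everything to showing $\mathcal H^{\phi_{1,\gamma}}(\mathcal F_m)=0$ for each $m$. Grouping the integers into dyadic blocks $[2^k,2^{k+1})$, one has $\mathcal F_m\subset\limsup_k B_k$, where $B_k:=\{x:\ \sup_{2^k\le n<2^{k+1}}|S_nf(x)|>\lambda_k\}$ and $\lambda_k:=\frac1m(k\log 2)^\beta$; in particular $\mathcal F_m\subset\bigcup_{k\ge K}B_k$ for every $K$.

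The crux is a distributional estimate for $B_k$. Since $f\in\mathcal C(\TT)\subset L^q(\TT)$ for every $q<\infty$, I would invoke a quantitative form of the Carleson--Hunt maximal theorem, namely that $\bigl\|\sup_n|S_nf|\bigr\|_{L^q(\TT)}\le Cq\|f\|_{L^q}\le Cq\|f\|_\infty$ for $q$ large --- equivalently, that $\sup_n|S_nf|$ lies in the Zygmund class $\exp L$, so has an exponentially decaying distribution function when $f$ is bounded. Chebyshev's inequality with the optimal choice $q\asymp\lambda_k$ then yields $|B_k|\le C_{m,f}\exp\bigl(-c_{m,f}(k\log 2)^\beta\bigr)$, and likewise for $B_k^{1/2}:=\{x:\ \sup_{2^k\le n<2^{k+1}}|S_nf(x)|>\lambda_k/2\}$.

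Next I would turn this measure bound into a covering suited to $\phi_{1,\gamma}$ by thickening $B_k$ to scale $\approx 2^{-k}$ via Bernstein's inequality. If $x_0\in B_k$, choose $n\in[2^k,2^{k+1})$ with $|S_nf(x_0)|>\lambda_k$; using $\|S_nf\|_\infty\le C\|f\|_\infty\log n$ we get $\|(S_nf)'\|_\infty\le 2\pi n\|S_nf\|_\infty\le Ck\,2^k\|f\|_\infty$, whence $|S_nf|>\lambda_k/2$ on the interval of radius $\rho_k:=\lambda_k/\|(S_nf)'\|_\infty$ centred at $x_0$; up to constants depending on $m$ and $f$, $\rho_k\asymp k^{\beta-1}2^{-k}$. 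Thus every interval of a fixed partition of $\TT$ into intervals of length $\rho_k$ that meets $B_k$ lies inside $B_k^{1/2}$, so at most $|B_k^{1/2}|/\rho_k+1$ of them meet $B_k$.

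Finally I would sum: covering $\mathcal F_m$ by these $\rho_k$-intervals for $k\ge K$ and using $\log(1/\rho_k)\le 2k\log 2$ for $k$ large, the $\phi_{1,\gamma}$-cost is at most
$$\sum_{k\ge K}\Bigl(\tfrac{|B_k^{1/2}|}{\rho_k}+1\Bigr)\phi_{1,\gamma}(\rho_k)\ \le\ \sum_{k\ge K}\bigl(|B_k^{1/2}|+\rho_k\bigr)\,e^{(2k\log 2)^{1-\gamma}}\ \le\ C\sum_{k\ge K}e^{-c(k\log 2)^\beta+(2k\log 2)^{1-\gamma}},$$
which converges and tends to $0$ as $K\to\infty$, precisely because $1-\gamma<\beta$. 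Hence $\mathcal H^{\phi_{1,\gamma}}(\mathcal F_m)=0$; the statement on the precised dimension follows immediately from the definition. The step I expect to be the genuine obstacle is the quantitative Carleson--Hunt input above: the weak $(2,2)$ bound alone gives only polynomial decay of $|B_k|$, which is wiped out by the gauge factor $e^{(2k\log 2)^{1-\gamma}}$, so one really needs the (near-)linear growth of the $L^q$ operator norm of the maximal partial-sum operator. It is also essential to estimate that maximal function over an entire dyadic block at once: handling the $S_nf$, $2^k\le n<2^{k+1}$, one at a time would cost a prohibitive factor $2^k$.
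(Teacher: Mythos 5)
Your proof is correct, and it organizes the argument differently from the paper while relying on the same two essential ingredients: Hunt's exponential distribution bound for the Carleson maximal operator $S^*$ on bounded functions, and Bernstein's inequality to thicken a point estimate into an interval estimate at scale $\asymp (\log n)^{\beta-1}/n$. The paper fixes a level $M$, picks for each $x\in\mathcal F(\beta,f,M)$ a single large $n_x$, extracts a Vitali family of disjoint intervals $I_i$, groups the indices $i$ by the dyadic magnitude of $\tfrac M2(\log n_i)^\beta$ into classes $\mathcal U_q$, and bounds $\sum_{i\in\mathcal U_q}|I_i|\le Ae^{-B2^q}$ because the $I_i$ sit disjointly inside $\{S^*f>2^q\}$; it then checks $|I_i|\ge Ce^{-D2^{q/\beta}}$ on $\mathcal U_q$ and sums the gauge cost. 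You instead group by the dyadic block of the frequency index $n\in[2^k,2^{k+1})$, forming the sets $B_k$, bound $|B_k|$ directly from Hunt's estimate, and replace the Vitali step by a fixed partition of $\TT$ at scale $\rho_k\asymp k^{\beta-1}2^{-k}$, bounding the number of useful cells by $|B_k^{1/2}|/\rho_k$. Both routes hinge on the same competition $e^{-c(k\log 2)^\beta}$ versus $e^{(2k\log 2)^{1-\gamma}}$ won because $1-\gamma<\beta$; yours produces $\mathcal H^{\phi_{1,\gamma}}(\mathcal F_m)=0$ directly, whereas the paper first gets a finite $\mathcal H^{\phi_{1,\alpha}}$ bound for some $\alpha\in(1-\beta,\gamma)$ and then uses $\phi_{1,\alpha}\gg\phi_{1,\gamma}$. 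Your version is slightly cleaner in that the Bernstein scale $\rho_k$ is genuinely uniform over the block $[2^k,2^{k+1})$, which removes the dependence on the chosen $n_x$ that the Vitali argument has to manage. One small point worth making explicit: your derivation of the exponential tail from $\|S^*\|_{L^q\to L^q}\lesssim q$ is fine, but the paper simply quotes the exponential distribution estimate (Hunt's theorem in the form of \cite[Theorem 12.5]{Ar02}), which is the more economical reference and avoids any discussion of the precise growth of the $L^q$ operator norm.
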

\begin{proof}
A key point in Aubry's proof of Theorem \ref{THMAUBRY} is the Carleson-Hunt theorem which asserts that,
for any $g\in L^p(\TT)$, $1<p<+\infty$,
$$\|S^* g\|_{p}\leq C_p\|g\|_{p}\quad\textrm{where}\quad S^*g(x)=\sup_{n\ge 0} |S_ng(x)|.$$
On $\mathcal C(\TT)$, a weak inequality (also due to Hunt) remains valid (see \cite[Theorem 12.5]{Ar02}): there
are two absolute constants $A,B>0$ such that, for every $f\in\mathcal C(\TT)$ and every $y>0$,
$$\mathcal \lambda\big(\left\{x\in\TT\ ;\ S^* f(x)>y\right\}\big)\leq Ae^{-By/\|f\|_\infty}.$$
Here, $\lambda$ denotes the Lebesgue measure on $\TT$. 

So, let $\beta\in(0,1)$ and $f\in\mathcal C(\TT)$. We may assume $\|f\|_\infty\leq 1$. For any $M>0$,
we introduce
$$\mathcal F(\beta,f,M)=\left\{x\in\TT;\ \limsup_{n\to +\infty} \,(\log n)^{-\beta}|S_n f(x)|> M\right\}.$$
Since $\mathcal F(\beta,f)=\bigcup_{M>0} \mathcal F(\beta,f,M)$, we just need to prove that
$\mathcal H^{\phi_{1,\gamma}}\big(\mathcal F(\beta,f,M)\big)=0$ for every $M>0$.
 From now on, we fix some $M>0$.
We pick any $x\in\mathcal F(\beta,f,M)$ and $n_x$ large enough such that 
$$|S_{n_x}f(x)|\geq M(\log n_x)^\beta.$$
Such an inequality remains true in an interval around $x$ whose size is not so small. Precisely, 
because $n_x$ can be assumed to be large and since the $L^1$-norm of the
Dirichlet kernel $D_n$  
behaves like $\frac{4}{\pi^2}\log n$, we may assume that
$\|S_{n_x}f\|_\infty\leq (\log n_x)\|f\|_{\infty}\le \log n_x$.
By Bernstein's inequality, $\|(S_{n_x}f)'\|_\infty\leq n_x \log n_x$. Let
$$I_x=\left[x-\frac{M}{2n_x(\log n_x)^{1-\beta}},x+\frac{M}{2n_x(\log
    n_x)^{1-\beta}}\right].$$ 
For any $y\in I_x$, we get 
\begin{eqnarray}\label{EQLEMCTMAJO1}
|S_{n_x}f(y)|\geq \frac M2(\log n_x)^\beta.
\end{eqnarray}
$(I_x)_{x\in\mathcal F(\beta,f,M)}$ is a covering of $\mathcal F(\beta,f,M)$. 
We can extract a Vitali's covering, namely a countable family of disjoint intervals $I_i$, $i\in\mathbb N$,
of length $\frac M{n_i(\log n_i)^{1-\beta}}$ such that $\mathcal F(\beta,f,M)\subset \bigcup_{i}5I_i$.
Let us finally set, for any $q\geq 1$, $\mathcal U_q=\left\{i;\ 2^{q+1}\geq \frac{M(\log n_i)^\beta}{2}>2^q\right\}$.
Without loss of generality, we may assume the $n_i$ so large that $\bigcup_q \mathcal U_q=\mathbb N$. By applying Hunt's theorem,
$$\mathcal \lambda\left(\left\{x;\ S^* f(x)>2^q\right\}\right)\leq Ae^{-B2^q}.$$
Now, by (\ref{EQLEMCTMAJO1}), the set $\{x;\ S^* f(x)>2^q\}$ contains the disjoint intervals $I_i$, for $i\in\mathcal U_q$.
Thus,
$$\sum_{i\in\mathcal U_q} |I_i|\leq Ae^{-B2^q}.$$
Moreover, for any $i\in\mathcal U_q$, it is not hard to check that
$$|I_i|\geq Ce^{-D2^{q/\beta}}$$
for some positive constants $C,D$ which do not depend on $q$. Picking any
$\alpha$ such that $1-\beta<\alpha<\gamma$, 
we get 
\begin{eqnarray*}
\sum_{i\in\mathcal U_q}\mathcal \phi_{1,\alpha}(5|I_i|)
&=&\sum_{i\in\mathcal U_q} 5|I_i| \exp\left((\log (1/5|I_i|))^{1-\alpha}\right)\\
&\leq&5\left(\sum_{i\in\mathcal U_q}|I_i|\right)\exp\left(\left(D2^{q/\beta}-\log 5C\right)^{1-\alpha}\right)\\
&\leq&5Ae^{-B2^q+D'2^{q(1-\alpha)/\beta}}.
\end{eqnarray*}
Since $1-\alpha<\beta$, this shows that there exists $C_0<+\infty$ such that 
$$\sum_{i\in\mathbb N}\phi_{1,\alpha}(5|I_i|)=\sum_{q\in\mathbb
N}\sum_{i\in\mathcal U_q}\phi_{1,\alpha}(5|I_i|)\le C_0.$$
Remember that $\bigcup_i5I_i$ is a covering of $\mathcal F(\beta,f,M)$ and
that the $I_i$ can be chosen as small as we want. We can then conclude that 
$\mathcal H^{\phi_{1,\alpha}}(\mathcal F(\beta,f,M))\le C_0$. In particular, $\mathcal H^{\phi_{1,\gamma}}\big(\mathcal F(\beta,f,M)\big)=0$,
since $\phi_{1,\alpha}\gg\phi_{1,\gamma}$.
\end{proof}

\begin{remark}
The functions $\phi_{1,\gamma}$, for $\gamma>1-\beta$, are not optimal in the statement of the previous lemma.
We can replace them by any function $\phi(x)=x\left(\exp\big((\log 1/x)^\beta\veps(x)\big)\right)$ with 
$\veps(x)$ goes to 0 as $x$ goes to 0.
\end{remark}

\subsection{The basic construction}
When we try to build explicitely a continuous function whose Fourier series diverges at some point, say 0, a natural way is to consider
polynomials $P$ with small $L^\infty$ norm, and satisfying nevertheless that
$|S_nP(0)|$ is big for some large value of $n$. The easiest examples are 
$$P_N(x)=e_N(x) \sum_{j=1}^N \frac{\sin(2\pi jx)}{j},$$
since the sequence $(\|P_N\|_\infty)_{N\ge 1}$ is bounded whereas $|S_N(P)(0)|\sim \frac12\log N$. Moreover, this example is in some sense optimal
since $\|S_N f\|_\infty\leq C(\log N)\|f\|_{\infty}$ for any $f\in \mathcal C(\TT)$.

In our context, we need to find a polynomial $P$ which satisfies a similar property not only at one point, but on a set which is rather big since at the end we want to construct
sets of divergence with Hausdorff dimension 1. This does not seem to be the case for $P_N$, the reason being that $|(S_NP)'(0)|$ behaves like $N$, which is
much bigger than $S_NP(0)$. 

\smallskip

To tackle this problem, we start from a construction of Kahane and Katznelson in \cite{KK66} (see also \cite{Katz}) which
they use to prove
that every subset of $\TT$ of Lebesgue measure 0 is a set of divergence for $\mathcal C(\TT)$. Since we want to control
both the size of the sets $E$ and the index $n$ such that $S_n P(x)$ becomes larger than some given real number for any $x\in E$,
the forthcoming lemma needs very careful estimations.

\begin{lemma}\label{LEMBASICCT}
Let $\beta\in(0,1)$, $\delta\in(0,1)$ and $K\geq 2$. Then there exist an
integer $k\ge K$, an integer $n$ as large as we want and  
a trigonometric polynomial $P$ with spectrum contained in $[0,2n-1]$ such that
\begin{itemize}
\item $|P(x)|\leq 1$ for any $x\in\TT$;
\item $\log |S_nP(x)|\geq (1-\delta)\beta\log\log n$ for any $x\in I_k^\beta$,
\end{itemize}
where $\displaystyle I_k^\beta=\bigcup_{j=0}^{k-1}\left[\frac jk-\frac{1}{2k\exp\big((\log k)^\beta\big)};\frac jk+\frac{1}{2k\exp\big((\log k)^\beta\big)}\right]$.
\end{lemma}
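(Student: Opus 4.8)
The plan is to adapt the Kahane--Katznelson construction so as to simultaneously control three quantities: the $L^\infty$-norm of $P$ (which must stay bounded by $1$), the index $n$ at which $S_nP$ is large (which must be explicit in $k$), and the \emph{length} of the interval around each point $j/k$ on which $S_nP$ stays large (which must be at least $\frac{1}{2k\exp((\log k)^\beta)}$). First I would recall the Kahane--Katznelson building block: for a given finite set of points, one produces a polynomial with bounded sup-norm whose partial sums are $\gtrsim \log N$ at those points, obtained roughly as a sum of modulated Dirichlet-type kernels $e_{N_\ell}(x)\sum_{j}\frac{\sin(2\pi j(x-a_\ell))}{j}$ placed at the prescribed centers $a_\ell$, with the frequency blocks $N_\ell$ spread far apart so the spectra are disjoint and the $L^\infty$ contributions can be summed with a small loss. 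The key modification, forced by the discussion preceding the lemma (the derivative of $S_NP_N$ at $0$ is of order $N$, far bigger than $S_NP_N(0)\sim\frac12\log N$), is that a \emph{single} kernel cannot keep $|S_nP|$ large on an interval of the required width; so instead of putting the kernel at one point $a_\ell$, I would average (convolve) it against a smooth bump of width comparable to the target interval length, exactly as in Lemma~\ref{LEMBASICLP} and using the precised Fej\'er estimate, Lemma~\ref{LEMFEJER}. Convolving with such a bump of width $\eta$ damps the partial sum by only a bounded factor as long as $\eta$ is not too small relative to $1/n$, while it replaces the single-point estimate by an estimate valid on the whole interval of width $\eta$.

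The concrete scheme: fix $\beta,\delta,K$. Choose $k\ge K$ large, set $\eta_k = \frac{1}{k\exp((\log k)^\beta)}$ (the width of each component of $I_k^\beta$), and for each $j\in\{0,\dots,k-1\}$ let $\chi_j$ be a smooth bump equal to $1$ on $[\frac jk-\eta_k/2,\frac jk+\eta_k/2]$, supported in $[\frac jk-\eta_k,\frac jk+\eta_k]$, with $\|\chi_j'\|_\infty\lesssim 1/\eta_k$. The naive target is that $S_nP(x)\approx (\log n)^\beta$ on $I_k^\beta$, i.e.\ we want $\log n$ to be of order $(\log k)$ raised to an appropriate power so that $(\log\log n)\cdot\beta$ matches $\log(\log n)^\beta$; tracking the bookkeeping, one is led to take $n$ of the form $\exp\big(c(\log k)^{1/\beta}\big)$-ish, adjusted so that after the $(1-\delta)$-loss the inequality $\log|S_nP(x)|\ge(1-\delta)\beta\log\log n$ holds. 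I would build $P$ as a normalized sum $P=c_k\sum_{\ell} e_{N_\ell}\,\sigma_{m}(\chi_{j(\ell)})\cdot(\text{a }\log\text{-sized Dirichlet tail})$, with the frequency centers $N_\ell$ chosen in geometric progression inside $[0,2n-1]$ so the spectra are pairwise disjoint, $m\asymp 1/\eta_k$, and $c_k$ the normalizing constant making $\|P\|_\infty\le 1$. On each $I_k^\beta$-component, only the one term with $j(\ell)=j$ contributes to $S_nP(x)$ up to the chosen index, its value is $\gtrsim c_k\log n$ by the Dirichlet-tail estimate together with $|\sigma_m\chi_j(x)|\ge 1/4$ (Lemma~\ref{LEMFEJER} applied to $1-\chi_j$), and one checks $c_k\log n\ge(\log n)^{(1-\delta)\beta}$ for $k$ large.

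The main obstacle is the simultaneous optimization: the normalizing constant $c_k$ degrades as we pack more frequency blocks or make the bumps narrower, while both the number of blocks ($\sim k$) and the narrowness ($\eta_k\to 0$ polynomially-times-$\exp((\log k)^\beta)$) are dictated by the geometry of $I_k^\beta$; one must verify that $c_k$ does not decay so fast that $c_k\log n < (\log n)^{(1-\delta)\beta}$. This is precisely where the $(1-\delta)$ slack and the freedom to take $n$ (hence $\log n$) as large as we want for fixed $k$ are spent: by first sending $n\to\infty$ we make $\log n$ dominate any fixed negative power of $k$ sitting in $c_k$, so the estimate survives. A secondary point requiring care is that convolving $\chi_j$ with $\sigma_m$ slightly enlarges the support, so $\chi_j$ must be taken equal to $1$ on a neighbourhood strictly larger than $[\frac jk-\eta_k/2,\frac jk+\eta_k/2]$ (a constant multiple suffices), and the disjointness of the enlarged supports for distinct $j$ must be rechecked; since the spacing between centers is $1/k \gg \eta_k$, this is harmless. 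Finally, the spectrum of $P$ lies in $[0,2n-1]$ by choosing the top frequency block plus the width of its factor to fit, which fixes the relation between $n$ and the $N_\ell$.
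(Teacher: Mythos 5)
The proposal correctly identifies the Kahane--Katznelson construction as the relevant tool and correctly isolates the tension (bounded $\|P\|_\infty$ versus large $|S_nP|$ on a set of prescribed size), but it misremembers what the Kahane--Katznelson construction actually is, and the alternative scheme it substitutes has a gap that I do not think can be repaired along the lines you sketch.

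You describe the Kahane--Katznelson building block as ``a sum of modulated Dirichlet-type kernels $e_{N_\ell}(x)\sum_{j}\frac{\sin(2\pi j(x-a_\ell))}{j}$ placed at the prescribed centers $a_\ell$, with the frequency blocks $N_\ell$ spread far apart.'' That is not the construction (and indeed cannot be made to work directly). The real mechanism, and the one the paper uses, is to build a single holomorphic function $f$ on a neighbourhood of $\overline{\DD}$ with $\Re f>0$, with $|f|\sim\exp\big((\log k)^\beta\big)$ on $I_k^\beta$ and $|f|$ comparably bounded everywhere; one then takes $g=\log f$. The point of the logarithm is that $|\Im g|\leq\pi/2$ automatically --- no normalization constant is needed and no $k$-dependence enters the $L^\infty$ bound --- while $|g|\geq\log|f|\sim(\log k)^\beta$ on all of $I_k^\beta$. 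Setting $P=\frac{2}{\pi}e_n\sigma_n(\Im g)$ and using the algebraic identity $|S_nP|=\frac1\pi|\sigma_n g|$, the partial sum picks up $|g|$ rather than $|\Im g|$. This is the crucial ``gain of a log'' that your scheme lacks.

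The concrete scheme $P=c_k\sum_\ell e_{N_\ell}\,\sigma_m(\chi_{j(\ell)})\cdot(\text{Dirichlet tail})$ has two problems, depending on how one reads it. If the frequency blocks $N_\ell$ are pairwise disjoint, then a single cutoff index $n$ can straddle at most one block: for all but one $\ell$, the block is either wholly kept or wholly discarded by $S_n$, so $|S_nP|$ is large near only one center $a_\ell$, not on all of $I_k^\beta$. If instead the blocks are made to coincide (all centred at $n$) so that $S_n$ cuts through each, then the $L^\infty$ norm of $\sum_\ell\sigma_m(\chi_{j(\ell)})D_\ell$ is not $O(1)$: the off-centre tails $\sigma_m\chi_{j(\ell)}(x)$ decay only like $1/(m\,\mathrm{dist}(x,a_\ell)^2)$, and summing over $\ell$ gives a bound of order $k^2/m\sim k/\exp((\log k)^\beta)$, which tends to infinity. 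Then $c_k$ must decay like $\exp((\log k)^\beta)/k$, and $c_k\log L$ tends to $0$ instead of growing. Your proposed remedy --- ``send $n\to\infty$ for fixed $k$'' --- does not help, because the width $\eta_k$ of the components of $I_k^\beta$ forces $L\lesssim 1/\eta_k$ (otherwise the Dirichlet tail is no longer uniformly of size $\log L$ across a component), so $\log L$ cannot be taken arbitrarily large for fixed $k$. In short, the missing idea is precisely the positive-real-part-plus-logarithm device: without it the $L^\infty$ normalization cost cannot be beaten by the partial-sum gain.
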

\begin{proof}
Let us first describe the idea of the proof. We shall construct a trigonometric polynomial $Q$ with spectrum in $[1,n-1]$ and with the following properties:
$|\Im m\ Q|$ is small and $|Q|$ is large on a set $E$. We then set $P=e_{n}\times \Im m\ Q$, so that $\|P\|_\infty$ is small. On the other hand,
writing $Q=\sum_{k=1}^{n-1} a_k e_k$, $2i\Im m\ Q=-\sum_{k=1}^{n-1}\overline{a_k}e_{-k}+\sum_{k=1}^{n-1} a_k e_k$, so that
$$|S_n (P)|=\frac 12\left| \sum_{k=1}^{n-1} \overline{a_k} e_{n-k}\right|=\frac12\left|\sum_{k=1}^{n-1} a_k e_k\right|=\frac12|Q|$$
is large on $E$. The construction of $Q$ will be done by taking $\log f$, the
logarithm of an holomorphic function defined on a neighbourhood of the closed
unit disk $\overline\DD$ (which allows to control
the imaginary part of $\log f$ while the modulus of it can be large), and by taking a Fej\'er sum of $\log f$. 

We now proceed with the details. The proof uses holomorphic functions and it is better to see $\TT$ as the boundary of the unit disk $\DD$.
To avoid cumbersome notations, the letter $C$ will denote throughout the proof a positive and absolute constant, whose value may change from line to line.
Let $k\geq K$ whose value will be fixed later. We set:
\begin{eqnarray*}
\veps&=&\frac1{k\exp\big((\log k)^\beta\big)}\\
z_j&=&e^{\frac{2\pi ij}k},\ j=0,\dots,k-1\\
f(z)&=&\frac1k\sum_{j=0}^{k-1}\frac{1+\veps}{1+\veps-\overline{z_j}z}.
\end{eqnarray*}
$f$ is holomorphic in a neighbourhood of $\overline{\DD}$. We claim that $f$ satisfies the following four properties.
\begin{description}
\item[(P1)]$\forall z\in\overline{\DD}$,\quad $\Re ef(z)\geq C\veps$;
\item[(P2)]$\forall z\in I_k^\beta,\quad |f(z)|\geq\Re ef(z)\geq C\exp\big((\log k)^\beta\big)$;
\item[(P3)]$\forall z\in\TT,\quad |f(z)|\leq C\exp\big((\log k)^\beta\big)$;
\item[(P4)]$\forall z\in\TT,\quad \left|\frac{f'(z)}{f(z)}\right|\leq\frac{C}{\veps^3}$.
\end{description}
Indeed, for any $z\in\overline{\DD}$ and any $j\in\{0,\dots,k-1\}$,
\begin{eqnarray}\label{EQBASIC1}
\Re e\left(\frac{1+\veps}{1+\veps-\overline{z_j}z}\right)=\frac{1+\veps}{|1+\veps-\overline{z_j} z|^2}\Re e\big(1+\veps-z_j\overline{z}\big)\geq\frac{1+\veps}{(2+\veps)^2}\times\veps\geq C\veps,
\end{eqnarray}
which proves \textbf{(P1)}. To prove \textbf{(P2)}, we may assume that $z=e^{2\pi i\theta}$ with 
$\theta\in\left[\frac{-\veps}{2};\frac{\veps}{2}\right]$. Then 
$$\Re e\left(\frac{1+\veps}{1+\veps-\overline{z_0}z}\right)=\frac{1+\veps}{|1+\veps-z|^2}\Re e\big(1+\veps-\overline{z}\big)\geq\frac C\veps.$$
If we combine this with (\ref{EQBASIC1}), we get 
$$\Re e f(z)\ge \frac{C}{k \veps}+\frac{k-1}k C\veps\ge \frac{C}{k \veps}=C\exp\big((\log k)^\beta\big).$$
which gives \textbf{(P2)}. 

Conversely, we want to control $\sup_{z\in\TT}|f(z)|$. Pick any $z=e^{2\pi i\theta}\in\TT$.
By symmetry, we may and shall assume that $|\theta|\leq\frac1{2k}$. Then we get
$$\left|\frac{1+\veps}{1+\veps-\overline{z_0}z}\right|\leq\frac{C}{\veps}.$$
Now, for any $j\in\{1,\dots,k/4\}$, we can write
\begin{eqnarray*}
|1+\veps-\overline{z_j}z|&\geq& |\Im m(\overline{z_j}z)|\\
&\geq&\sin\left(\frac{2\pi j}k-2\pi\theta\right)\\
&\geq&\frac2\pi\times2\pi \left(\frac jk-\theta\right)\\
&\geq&\frac{4}k\left(j-\frac12\right).
\end{eqnarray*}
Taking the sum, 
$$\left|\sum_{j=1}^{k/4}\frac{1+\veps}{1+\veps-\overline{z_j}z}\right|
\leq \frac{k(1+\veps)}4\sum_{j=1}^{k/4}\frac{1}{j-1/2}\leq Ck\log k.$$
In the same way, we obtain
$$\left|\sum_{j=3k/4}^{k-1}\frac{1+\veps}{1+\veps-\overline{z_j}z}\right|
\leq Ck\log k$$
whereas $|1+\veps-\overline{z_j}z|\geq C$ for any $j\in[k/4,3k/4]$, so that
$$\left|\sum_{j=k/4}^{3k/4}\frac{1+\veps}{1+\veps-\overline{z_j}z}\right|
\leq Ck.$$
Putting this together, we get 
$$|f(z)|=\left|\frac1k\sum_{j=0}^{k-1}\frac{1+\veps}{1+\veps-\overline{z_j}z}\right|\leq C\left(\frac{1}{k\veps}+\log k+1\right)\leq C\exp\left((\log k)^\beta\right).$$
Finally, it remains to prove \textbf{(P4)}. We observe that
$$f'(z)=\frac1k\sum_{j=0}^{k-1}\frac{(1+\veps)\overline{z_j}}{(1+\veps-\overline{z_j}z)^2}.$$
We do not try to get a very precise estimate for $|f'(z)|$ (this is not useful for us). We just observe
that $|1+\veps-\overline{z_j}z|^2\geq\veps^2$ for any $j\in\{0,\dots,k-1\}$ and any $z\in\TT$, so that
$$|f'(z)|\leq\frac{C}{\veps^2}.$$
If we combine this with \textbf{(P1)}, we get \textbf{(P4)}.
\medskip

  We are almost ready to construct $P$. The next step is to take $h(z)=\log(f(z))$, which defines a holomorphic function
in a neighbourhood of $\overline{\DD}$ by \textbf{(P1)}. Moreover, $|\Im
m(h(z))|\leq\pi/2$ for any $z\in\overline{\DD}$ and $h(0)=0$. Now, we look at
the function $h$ on the boundary of the unit disk $\DD$, that is we introduce
the function $g(x)=h(e^{2i\pi x})$ defined on the circle 
$\TT=\mathbb R/\mathbb Z$. Properties 
\textbf{(P2)}, \textbf{(P3)} and \textbf{(P4)} can be rewritten as 
\begin{eqnarray*}
\forall x\in I_k^\beta,\quad |g(x)|&\geq&(\log k)^\beta-C\\
\forall x\in\TT,\quad |g(x)|&\leq&(\log k)^\beta+C\\
\forall x\in\TT,\quad |g'(x)|&\leq&Ck^3\exp\big(3(\log k)^\beta\big).
\end{eqnarray*}
Let now $n$ be the smallest
integer such that 
${C}{k^3\exp\big(3(\log k)^\beta\big)}\leq n$. We also have
$\|g'\|_\infty\leq n$ and we can apply the second part of Lemma \ref{LEMFEJER}
to the function $\theta(t)=g(t)-g(x)$ when $x\in I_k^\beta$. Recall that 
$\|\theta\|_\infty\leq 2(\log k)^\beta+C$. We get
$$|\sigma_{n}\theta(x)|\leq \frac{(\log k)^\beta}2+C$$
and we can conclude that
$$|\sigma_{n} g(x)|\geq |g(x)|-|\sigma_{n}\theta(x)|\geq \frac{(\log k)^\beta}2-C.$$
We finally set 
$$P=\frac{2}\pi e_{n}\sigma_n(\Im m g)=\frac{2}\pi e_{n}\Im m (\sigma_n g).$$
It is straightforward to check that $\|P\|_\infty\leq 1$ (recall that
$\sigma_n$ is a contraction on $\mathcal C(\TT)$), and that the spectrum of
$\sigma_n g$ is contained
in $[1,n-1]$ ($\hat g(0)=0$ since $h(0)=0$). Now, the simple algebraic trick exposed at the beginning of the proof shows that
$$|S_nP(x)|=\left|\frac{1}\pi \sigma_n g(x)\right|,$$
so that, for any $x\in I_k^\beta$, 
$$|S_nP(x)|\geq \frac1{2\pi}(\log k)^\beta-C.$$
This leads to 
$$\log |S_nP(x)|\geq\beta\log\log k-C.$$
On the other hand,
\begin{eqnarray*}
\log \log n&\leq&\log\left(3\log k+3(\log k)^\beta+\log C\right)\\
&\leq&\log\log k+C.
\end{eqnarray*}
Finally,
$$\frac{\log\log |S_nP(x)|}{\log\log n}\ge\frac{\beta\log\log k -C}{\log\log
  k+C}\ge (1-\delta)\beta,$$
provided $k$ has been chosen large enough. Moreover, $n$ can be chosen as
large as we want since $n\to +\infty$ when $k\to +\infty$.
\end{proof}
\begin{remark}
The fact that we have to compare $\log\log n$ and $\log |S_n|$ helps us for the previous proof. Even if $n$ and $k$ do not have the same order of growth, 
this is not apparent when we apply the iterated logarithm.
\end{remark}
\begin{remark}\label{REMARKNK}
During the construction, the integers $k$ and $n$ can't be chosen
independently~: they satisfy $n-1\le Ck^3\exp\big(3(\log k)^\beta\big)\leq n$
where $C$ is an absolute constant. If we want to construct a polynomial $P$
satisfying the conclusion of Lemma \ref{LEMBASICCT} with a large value of $n$,
we need also to choose a large value of $k$.
\end{remark}
\subsection{The conclusion}
We are now going to prove the full statement of Theorem \ref{THMCT3}. At this point, the situation is less favourable than in the
$L^p$-case. There, the basic construction done at each step $j$ did not depend on the index of divergence that we would like to get.
We had the same function $g_j$ which worked for all indices of divergence, and it was the dyadic exponent of $x$ which
decided how large was $|g_j(x)|$.
The construction done in Lemma \ref{LEMBASICCT} is less efficient, because the polynomial $P$ does depend on the expected divergence index $\beta$
(the index $\beta$ is a parameter of the definition of $f$ above). We have to overcome this new difficulty
and the solution will be to introduce redundancy in the construction of the $G_\delta$-set.

As usual, we start from a sequence $(f_j)_{j\ge 1}$ of polynomials which is dense in $\mathcal C(\TT)$. For convenience, 
we assume that $\|f_j\|_\infty\leq j$ for any $j$ and that the spectrum of $f_j$ is contained in $[-j,j]$.
Furthermore, we fix four sequences $(\alpha_l),\ (\beta_l),\ (\delta_l)$ and $(\veps_l)$ with values in $(0,1)$ and such that:
\begin{itemize}
\item $(\beta_l)$ is dense in $(0,1)$ and $l\mapsto\beta_l$ is one to one;
\item $\sum_l \veps_l\leq 1$;
\item $(\delta_l)$ and $(\alpha_l)$ go to zero.
\item $\delta_l<1/3$.
\end{itemize}
Let now $j\geq 1$. By induction on $l=1,\dots,j$, we build sequences $(P_{j,l})$, $(n_{j,l})$ and $(k_{j,l})$ 
satisfying the conclusions of Lemma \ref{LEMBASICCT} with $\beta=\beta_l$,
$\delta=\delta_l$ and $K=j$ (to ensure that $\lim_{j\to
  +\infty}k_{j,l}=+\infty$) and we will decide how large should be $n_{j,l}$ during
the construction. According to Remark \ref{REMARKNK}, these constraints on
$n_{j,l}$ will
determine the values of the $k_{j,l}$. We then set
$$g_j:=f_j+\alpha_j\sum_{l=1}^j \veps_l e_{n_{j,l}}P_{j,l}$$
so that $\|g_j-f_j\|_\infty\leq \alpha_j\sum_{l=1}^j \veps_l \|P_{j,l}\|_\infty\leq \alpha_j$. In particular, the sequence 
$(g_j)$ remains dense in $\mathcal C(\TT)$. 

Recall that the spectrum of $f_j$ is included in $[-j,j]$ and observe that 
the spectrum of $e_{n_{j,l}}P_{j,l}$ lies in $[n_{j,l},3n_{j,l}-1]$. If we
suppose that $n_{j,1}=j+1$ and $n_{j,l+1}\ge 3n_{j,l}$, we can conclude that
the spectra of $f_j,e_{n_{j,1}}P_{j,1},\cdots,e_{n_{j,j}}P_{j,j}$ are disjoint.

Let now $x$ belongs to $I_{k_{j,l}}^{\beta_{l}}$ for some $l\leq j$. Then
\begin{eqnarray*}
\left|S_{2n_{j,l}}g_j(x)\right|&\geq&\alpha_j \veps_l \left|S_{n_{j,l}}P_{j,l}(x)\right|-\alpha_j\sum_{m=1}^{l-1}\veps_m \|P_{j,m}\|_\infty-j\\
&\geq&\alpha_j\veps_l \left|S_{n_{j,l}}P_{j,l}(x)\right|-\alpha_j-j.\\
\end{eqnarray*}
Because we can choose $n_{j,l}$ as large as we want in the process, 
we may always assume that the choice that we have done ensures that
$$\left|S_{2n_{j,l}}g_j(x)\right|\geq\frac{\alpha_j\veps_l}2 \left|S_{n_{j,l}}P_{j,l}(x)\right|.$$
Taking the logarithm, we find
\begin{eqnarray*}
\log \left|S_{2n_{j,l}}g_j(x)\right|&\geq&\log \left|S_{n_{j,l}}P_{j,l}(x)\right|+\log \veps_l+\log \alpha_j-\log 2\\
&\geq&(1-\delta_l)\beta_l \log\log(n_{j,l})+\log \veps_l+\log\alpha_j-\log 2\\
&\geq&(1-2\delta_l)\beta_l \log\log(2n_{j,l})
\end{eqnarray*}
provided again that we have chosen $n_{j,l}$ very large. 

We then fix $r_j>0$ so small that, for any $f\in B(g_j,r_j)$ 
(the balls are related to the norm $\|\cdot\|_\infty$), for any $l\leq j$, 
$$\|S_{2n_{j,l}}f-S_{2n_{j,l}}g_j\|_\infty\le 1/2.$$ 
Observe that for every
real number $t \ge 1$, we have  
$\log(t-1/2)\ge \log(t)-\log 2$. 
For any $x\in I_{k_{j,l}}^{\beta_{l}}$ with $l\le j$,  we get 
\begin{eqnarray*}
\log \left|S_{2n_{j,l}}f(x)\right|&\ge&\log
\left|S_{2n_{j,l}}g_j(x)\right|-\log 2\cr
&\ge&(1-2\delta_l)\beta_l\log \log (2n_{j,l})-\log2\cr
&\ge&(1-3\delta_l)\beta_l\log \log (2n_{j,l})
\end{eqnarray*}
if $n_{j,l}$ are chosen sufficiently large such that $\delta_l\beta_l\log \log
(2n_{j,l})\ge\log 2$. 

We finally set 
$$A=\bigcap_{p\in\NN}\bigcup_{j\geq p}B(g_j,r_j),$$ 
and we claim that $A$
is the dense $G_\delta$ set we are looking for. 

Indeed, let $f$ belong to $A$ and let $(j_p)$ be an increasing sequence
of integers such that for every $p\ge 0$, $f\in B(g_{j_p},r_{j_p})$. We consider
$\beta\in(0,1)$ and choose $p_0$ such that
$$\left\{\beta_1,\cdots,\beta_{j_{p_0}}\right\}\cap\, (0,\beta)\not=\emptyset .$$

Such a $p_0$ exists since the sequence $(\beta_l)_{l\ge 1}$ is dense in
$(0,1)$. For every $p\ge p_0$, let $l_p$ be chosen in $\{1,\cdots, j_p\}$
such that 
$$\beta-\beta_{l_p}=\inf\{\beta-\beta_l;\ l\leq j_p\textrm{ and }\beta >\beta_l\}.$$
Since the sequence $(\beta_l)$ is dense in $(0,1)$, $\beta_{l_p}<\beta$ for $p\geq p_0$
and $\beta_{l_p}\to \beta$. Moreover, since $l\mapsto \beta_l$ is one to one,
it is clear that $l_p$ is non decreasing and goes to $+\infty$.

Observe that, for $p\geq p_0$, $I_{k_{j_p,l_p}}^{\beta}\subset I_{k_{j_p,l_p}}^{\beta_{l_p}}$, so that,
for any $x\in I_{k_{j_p,l_p}}^{\beta}$, setting $N_p=2n_{j_p,l_p}$,
$$\log |S_{N_p}f(x)|\geq (1-3\delta_{l_p})\beta_{l_p}\log\log(N_p).$$
In particular, setting $F=\limsup_p I_{k_{j_p,l_p}}^\beta$, we get that
$$\limsup_{n\to+\infty}\frac{\log |S_n f(x)|}{\log\log n}\geq \beta$$
for any $x\in F$. Now, we can apply Corollary \ref{CORUBI2} with a jauge
function $\phi$ satisfying 
$\phi^{-1}(y)=y\exp\left[-(\log(1/2y))^{\beta}\right]$ to obtain ${\mathcal
  H}^\phi(F)=\infty$. 

Observe that if $y=\phi(x)$, then
$$y=x\exp\left[(\log(1/2y))^{\beta}\right]
\quad\text{and}\quad \log x\le \log y.$$
It follows that $\phi(x)\le
x\exp\left[(\log(1/2x))^{\beta}\right]\le\phi_{1,1-\beta}(x)$ and  
${\mathcal H}^{\phi_{1,1-\beta}}(F)=+\infty$.

We now conclude exactly as in the $L^p$-case, using Lemma \ref{LEMCTMAJO} to replace Aubry's result. Namely, we set 
\begin{eqnarray*}
F^1&=&\left\{x\in F;\ \limsup_{n\to+\infty}\frac{\log |S_n f(x)|}{\log\log n}= \beta\right\}\\
F^2&=&\left\{x\in F;\ \limsup_{n\to+\infty}\frac{\log |S_n f(x)|}{\log\log n}> \beta\right\}
\end{eqnarray*}
and we observe that Lemma \ref{LEMCTMAJO} guarantees that ${\mathcal H}^{\phi_{1,1-\beta}}(F^2)=0$. Thus,
${\mathcal H}^{\phi_{1,1-\beta}}(F^1)=+\infty$ and the precised Hausdorff dimension of $F(\beta,f)$, which contains $F^1$, is at least $(1,1-\beta)$.
By Lemma \ref{LEMCTMAJO}, it is exactly $(1,1-\beta)$. 

\begin{remark}
 It is amazing that, with our method, it is easier to prove Theorem \ref{THMCT3} and to deduce Theorem
  \ref{THMCT2} from it than to prove Theorem \ref{THMCT2} directly. Indeed, to ensure that the sets
   $\mathcal F(\beta,f)$ are big, we need to know that the sets $F(\beta',f)$ are small for $\beta'>\beta$. This cannot be done if we stay within the notion of Hausdorff dimension.
\end{remark}

\begin{remark}
The method developed above allows us to construct a 
``concrete function'' that satisfies the conclusion of Theorem \ref{THMCT3}. 
More precisely, it suffices to consider
$$g=\sum_{j=1}^{+\infty}\frac1{j^2}\sum_{l=1}^j\veps_le_{n_{j,l}}P_{j,l}$$
with the constraint $3n_{j,j}<n_{j+1,1}$ to ensure that the blocks 
$\sum_{l=1}^j\veps_le_{n_{j,l}}P_{j,l}$ have disjoint spectra. Such a function
is some kind of saturating function in the continuous case.
\end{remark}

\end{document}